\documentclass[final,leqno]{siamltex}

\usepackage{amsfonts}
\usepackage{amsmath}
\usepackage{amssymb}

\usepackage[dvips]{graphicx}
\usepackage{color}
\usepackage{algorithmic}
\usepackage{algorithm}
\usepackage{multirow}

\usepackage{comment}
\usepackage{stackrel}
\usepackage{url}
\usepackage{subfloat}
\usepackage{subfig}

\newtheorem{assumption}[theorem]{Assumption}
\newtheorem{remark}[theorem]{Remark}

\newcommand{\R}{\mathbb{R}}

\newcommand{\eqdef}{:=}
\newcommand{\ii}{^{(i)}}

\newcommand{\Lip}{L}

\newcommand{\prox}{{\rm{prox}}}

\DeclareMathOperator*{\argmin}{arg\,min}

\usepackage{mathtools}
\DeclarePairedDelimiter{\ceil}{\lceil}{\rceil}

\def\mplus{\mathrel{%
  \ooalign{\raise.29ex\hbox{$\scriptscriptstyle\mathbf{+}$}\cr}}}

 \def\mminus{\mathrel{%
  \ooalign{\raise.29ex\hbox{$\scriptscriptstyle\mathbf{-}$}\cr}}}


\title{Robust Block Coordinate Descent \\ \vspace{0.5cm}\textnormal{M\MakeLowercase{ay} 8, 2015}}
\author{Kimon Fountoulakis and Rachael Tappenden}

\author{
        Kimon~Fountoulakis\thanks{K. Fountoulakis is with the School of Mathematics and Maxwell Institute, The University of Edinburgh, Edinburgh,
Mayfield Road, Edinburgh EH9 3JZ, United Kingdom e-mail: K.Fountoulakis@sms.ed.ac.uk.} \and
        Rachael~Tappenden\thanks{R. Tappenden is with the School of Mathematics and Maxwell Institute, The University of Edinburgh, Edinburgh,
Mayfield Road, Edinburgh EH9 3JZ, United Kingdom e-mail: r.tappenden@ed.ac.uk.}
}

\begin{document}

\maketitle
\begin{abstract}
In this paper we present a novel \textit{randomized} block coordinate descent method for the minimization of a convex composite objective function.  The method uses (approximate) partial second-order (curvature) information, so that the algorithm performance is more robust when applied to highly nonseparable or ill conditioned problems. We call the method Robust Coordinate Descent (RCD). At each iteration of RCD, a block of coordinates is sampled randomly, a quadratic model is formed about that block and the model is minimized \emph{approximately/inexactly} to determine the search direction. An inexpensive line search is then employed to ensure a monotonic decrease in the objective function and acceptance of large step sizes. We prove global convergence of the RCD algorithm, and we also present several results on the local convergence of RCD for strongly convex functions. Finally, we present numerical results on large-scale problems to demonstrate the practical performance of the method.
\end{abstract}

\begin{keywords}
large scale optimization, second-order methods, curvature information, block coordinate descent, nonsmooth problems
\end{keywords}

\section{Introduction}
In this work we are interested in solving the following convex composite optimization problem
\begin{equation}
\label{Def_F}
  \min_{x \in \R^N}  F(x) \eqdef f(x) + \Psi(x),
\end{equation}
where $f(x)$ is a smooth convex function and $\Psi(x)$ is a (possibly) nonsmooth, block separable, extended real valued convex function (this will be defined precisely in Section \ref{S_Psi}).
Problems of the form of \eqref{Def_F} arise in many important scientific fields, and applications include machine learning \cite{yuanho}, regression \cite{IEEEhowto:Tibshirani} and compressed sensing \cite{IEEEhowto:CandesRombergTao,Candes06,Donoho06}. Often the term $f(x)$ is a data fidelity term, and the term $\Psi(x)$ represents some kind of regularization.

Frequently, problems of the form of \eqref{Def_F} are large-scale problems, i.e., the size of $N$ is of the order of a million or a billion. Large-scale problems impose restrictions on the types of methods that can be employed for the solution of \eqref{Def_F}. In particular, the methods should have low per iteration computational cost, otherwise completing even a single iteration of the method might require unreasonable time. The methods must also rely only on simple operations such as inner products or matrix vector products, and ideally, they should offer fast progress towards optimality.

First order methods, and in particular randomized coordinate descent methods, have found great success in this area because they can take advantage of the underlying problem structure (separability and block structure), and satisfy the requirements of low computational cost and low storage requirements. For example, in \cite{petermartin} the authors show that their randomized coordinate descent method was able to solve sparse problems with millions of variables in a reasonable amount of time.

Unfortunately, randomized coordinate descent methods have two significant drawbacks. First, due to its coordinate nature, it is efficient only on problems with high degree of separability, and performance suffers when there is a high dependency between variables. Second, as a first-order method, coordinate descent methods do not usually capture essential curvature information of the problem and have been shown to struggle on complicated sparse problems \cite{l1regSCfg}.

The purpose of this work is to overcome these drawbacks by equipping a randomized block coordinate descent method with approximate partial second-order information. In particular, at every iteration of RCD the direction is obtained by solving \textit{approximately} a block piece-wise quadratic model, where the model includes a matrix representing approximate second order information. Then, a line search is employed in order to guarantee a monotonic decrease of the objective function.

RCD randomly selects a block of coordinates at every iteration, which is inexpensive. Although the per iteration computational cost of the method may be higher than other randomized coordinate descent methods, we show that in practise the method is more robust and the total number of iterations decreases. In particular we show that RCD is able to solve difficult problems, on which other coordinate descent method may struggle. RCD uses an inexact search direction, (the termination condition for the block piecewise quadratic subproblem is inspired by \cite{sqa}), coupled with a line search to ensure a monotonic decrease in the function values, and we prove global convergence of RCD and study its local convergence properties.

\subsection{Literature review}

Coordinate descent methods are some of the oldest iterative methods, and they are often better known in the literature under various names such as Jacobi methods, Gauss-Seidel methods, among others. It has been observed that these methods suffer from poor practical performance, particularly on ill-conditioned problems. However, as we enter the era of big data, coordinate descent methods are coming back into favour, because of their ability to provide approximate solutions of some realistic very large/huge scale problems in a reasonable amount of time.

Currently, randomized coordinate descent methods include that of Richt\`{a}rik and Tak\`{a}\v{c} \cite{petermartin}, where the method can be applied to unconstrained convex composite optimization problems of the form \eqref{Def_F}. The algorithm is supported by theoretical convergence guarantees in the form of high probability iteration complexity results, and \cite{petermartin} also reports very impressive practical performance on highly separable large scale problems. The work has also been extended to the parallel case \cite{Richtarik12}, to include acceleration techniques \cite{Fercoq13}, and to include the use of inexact updates \cite{Tappenden13}.

Other important works on randomized coordinate descent methods include methods for huge-scale problems \cite{Nesterov12}, work in \cite{Lu13} that improves the complexity analysis of \cite{Richtarik12}, coordinate descent methods for group lasso \cite{Qin13,Simon12} and general regularizers \cite{ShalevSchwartz13,Wright12} and coordinate descent for constrained optimization problems \cite{Necoara14}.




Unfortunately, on ill-conditioned problems, or problems that are highly nonseparable, first order methods can display very poor practical performance, and this has prompted the study of methods that employ second order information. To this end, recently there has been a flurry of research on Newton-type methods for problems of the general form \eqref{Def_F}, or a special case where $\Psi(x)=\|x\|_1$. For example, Karimi and Vavasis \cite{Karimi14} have developed a proximal quasi-Newton method for $l_1$-regularized least squares problems, Lee, Sun and Saunders \cite{Lee12,Lee13} have proposed a family of Newton-type methods for solving problems of the form \eqref{Def_F} and Scheinberg and Tang \cite{Scheinberg13} present iteration complexity results for a proximal Newton-type method. Moreover, the authors in \cite{sqa} extended standard inexact Newton-type methods to the case of minimization of a composite objective involving a smooth convex term plus an $l_1$-regularizer term.
Finally, there exists parallel deterministic \cite{facchinei14} and sequential active set \cite{desantis14} block coordinate descent methods, where the authors incorporate some block
second-order information in the algorithmic process.

We believe that the works in \cite{sqa,facchinei14} can lead the way to allow general \textit{randomized} coordinate descent to practically incorporate second-order information and in this paper we propose a method which combines the ideas in \cite{sqa,facchinei14,petermartin}.

\subsection{Core ideas and Major Contributions}
In this section we list several of the core ideas and major contributions of this work on randomized block coordinate descent methods. The first two points briefly describe the idea of incorporating (approximate) partial second-order (curvature) information (which
have been also presented in a similar way in \cite{facchinei14}), whilst the last three are contributions of this paper.
\begin{enumerate}
 \item \textbf{Incorporation of some second order information.} RCD uses a quadratic model to determine the search direction, which incorporates a user defined positive definite matrix $H\ii(x_k)$. If $H\ii(x_k)$ approximates the Hessian, then second order information is incorporated into the quadratic model, and the search direction obtained by minimizing the model is an approximate Newton-type direction. We stress that $H\ii(x)$ can change at every iteration and this is an advantage over the method in \cite{petermartin} where the matrix is fixed at the start of the algorithm for each block of coordinates.
  \item \textbf{Inexact updates.} To ensure that this method is computationally practical, it is imperative that the iterates are inexpensive, and RCD achieves this through the use of \emph{inexact} updates. Any algorithm can be used to approximately minimize the quadratic model. Moreover, the stopping conditions for inner solve are \emph{easy to verify}; they depend upon quantities that are easy/inexpensive to obtain, or may be available as a byproduct of the inner search direction solver.
  \item \textbf{Blocks can vary throughout iterations.} If $\Psi(x)$ is completely separable into coordinates then we do not restrict ourselves to a fixed block structure; rather we allow the blocks of coordinates to \emph{change at any iteration}. This is important because every element of the Hessian can be accessed (this is discussed further in Section~\ref{S_HessianApprox}).
  \item \textbf{Line search.} The algorithm includes a line search step to ensure a monotonic decrease of the objective function as iterates progress.
  The line search is inexpensive to perform because, at each iteration, \textit{it depends on a single block of coordinates only}.
  One of the major advantages of incorporating second-order information combined with line search is to allow in practice the selection of \emph{large step sizes} (close to one).
  This is because unit step sizes can substantially improve the practical efficiency of a method. We prove that if $f$ is strongly convex, then close to the optimal solution unit step sizes are selected.  In fact, for all experiments that we performed,
  unit step sizes were accepted by line search for the majority of the iterations.

  \item \textbf{Convergence theory.} We provide global convergence results to show that the RCD algorithm is guaranteed to converge in the limit. We also provide local convergence theory for strongly convex functions $f$. In particular, depending on the choice of stopping condition for the inner search direction solve and the matrix $H\ii(x_k)$, we show that close to the optimal solution RCD has on expectation \textit{block} quadratic or superlinear rate of convergence.
\end{enumerate}
\subsection{Format of the paper}
The paper is organised as follows. In Section \ref{Section_Preliminaries} we introduce the notation and definitions that are used throughout this paper, as well as giving several technical results. We also define the quadratic model that is used in the algorithm, prove the equivalence of some stationarity conditions for problem \eqref{Def_F}, and define a continuous measure of the distance of the current point from the set of solutions of \eqref{Def_F}. A thorough description of the RCD algorithm is presented in Section \ref{S_Algorithm}, including how the blocks are selected/sampled at each iteration, a description of the search direction and line search, several suggestions for the matrices $H\ii(x_k)$, and we also present several concrete examples.

The second half of the paper is devoted to providing convergence results and numerical experiments. In Sections \ref{S_GlobalConvergence}, global convergence results are presented, which do not require $f$ to be convex.
Local convergence theory for RCD is presented in Section \ref{S_LocalConvergence}. There we show that, close to optimality line search accepts unit step sizes. Moreover, if both the stopping conditions for the inner search direction solve and the matrix $H\ii(x_k)$ are chosen appropriately, then RCD has on expectation block quadratic or superlinear rate of convergence. Finally, several numerical experiments are presented in Section \ref{S_Numerical}, which show that the algorithm performs very well in practice.

\section{Preliminaries}
\label{Section_Preliminaries}
In this section we introduce the notation and definitions that are used in this paper, and we also present some important technical results. Throughout the paper $\|\cdot\| \equiv \sqrt{\langle \cdot, \cdot \rangle}$
and $\|\cdot\|_A \equiv \sqrt{\langle \cdot, A\cdot \rangle}$, where $A$ is a positive definite matrix. Moreover, $\lambda_{\min}(\cdot)$ and $\lambda_{\max}(\cdot)$ denote the smallest  and largest eigenvalue of $\cdot$, respectively.

\subsection{Subgradient and subdifferential}
For a function $\Phi: \R^N \to \R \cup \{+ \infty\}$ the elements $s\in\mathbb{R}^N$ that satisfy
$$
     \Phi(y) \geq \Phi(x) + \langle s,y-x \rangle,
$$
are called the subgradients of $\Phi$ at point $x$. In words, all elements defining a linear function that supports the function $\Phi$ at point $x$ are subgradients. The set of all $s$ at a point $x$ is called the subdifferential of $\Phi$ and it is denoted by $\partial \Phi(x)$.

\subsection{Convexity}
\label{S_StrongConvex}
A function $\Phi: \R^N \to \R \cup \{+ \infty\}$ is strongly convex with convexity parameter $\mu_{\Phi} > 0$ if for all $x,y \in \R^N$, and where $s\in\partial \Phi(x)$,
\begin{equation*}
\label{strongly_convex_1}
     \Phi(y) \geq \Phi(x) + \langle s,y-x \rangle + \tfrac{\mu_{\Phi}}{2}\|y-x\|^2.
\end{equation*}
If $\mu_\Phi = 0$. then function $\Phi$ is said to be convex.

\subsection{Convex conjugate and proximal mapping}
For a convex function $\Phi: \R^N \to \R \cup \{+ \infty\}$, its convex conjugate is defined as
$ \Phi^*(y) \equiv\eqdef \sup_{u\in\mathbb{R}^N} \langle u,y \rangle  - \Phi(u) . $
The proximal mapping of a convex function $\Psi$ at $x$ is
\begin{equation}
\label{Def_prox}
  \prox_\Psi (x) \eqdef \arg \min_{y\in \R^N} \Psi(y) + \frac12 \|y-x\|^2,
\end{equation}
and the proximal mapping of its convex conjugate $\Psi^*$ is
\begin{equation}
\label{Def_dual_prox}
  \prox_{\Psi^*} (x) \eqdef \arg \min_{y\in \R^N} \Psi^*(y) + \frac12 \|y-x\|^2.
\end{equation}
The following relation holds between the two proximal mappings.
\begin{lemma}[Chapter 1, ($1.4$) in \cite{Rockafellar06}]
\label{moreau}
  Let $\Psi$ be a convex function and let $\Psi^*$ denote its convex conjugate. Then,
  $
  x = \prox_{\Psi}(x) + \prox_{\Psi^*}(x)
  $
  for all $x$.
 \end{lemma}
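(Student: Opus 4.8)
The plan is to prove the identity through first‑order optimality conditions and the subgradient inversion formula for conjugates. First I would fix $x \in \R^N$ and set $u \eqdef \prox_\Psi(x)$. Since the objective $y \mapsto \Psi(y) + \tfrac12\|y-x\|^2$ in \eqref{Def_prox} is strongly convex (with parameter at least $1$), it has a unique minimizer, so $u$ is well defined, and the first‑order optimality condition reads
\begin{equation*}
  0 \in \partial \Psi(u) + (u - x), \qquad \text{i.e.} \qquad x - u \in \partial \Psi(u).
\end{equation*}
Here I would be slightly careful to justify the sum rule for subdifferentials, which is immediate since the quadratic term is finite‑valued and smooth.

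Next I would invoke the standard conjugacy fact that, for a (closed proper) convex function $\Psi$, one has $s \in \partial\Psi(u)$ if and only if $u \in \partial\Psi^*(s)$. Applying this with $s = x - u$ gives $u \in \partial\Psi^*(x-u)$. Now set $v \eqdef x - u$. Then $u = x - v$, so the relation just obtained says $x - v \in \partial\Psi^*(v)$, which rearranges to $0 \in \partial\Psi^*(v) + (v - x)$. But this is exactly the first‑order optimality condition characterizing the (again unique) minimizer of $y \mapsto \Psi^*(y) + \tfrac12\|y-x\|^2$ from \eqref{Def_dual_prox}, so $v = \prox_{\Psi^*}(x)$. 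Combining, $x = u + v = \prox_\Psi(x) + \prox_{\Psi^*}(x)$, as claimed.

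The main obstacle, and the only non‑formal point, is the subgradient inversion $s \in \partial\Psi(u) \Leftrightarrow u \in \partial\Psi^*(s)$: it relies on $\Psi$ being closed (lower semicontinuous) and proper, so that $\Psi^{**} = \Psi$, and on the Fenchel–Young equality $\Psi(u) + \Psi^*(s) = \langle u, s\rangle$ holding precisely at such pairs. If one wished to avoid quoting this, an alternative route would be to expand $\Psi(y)$ in \eqref{Def_prox} via $\Psi = \Psi^{**}$, write the prox objective as $\min_y \max_z \big[\langle y,z\rangle - \Psi^*(z) + \tfrac12\|y-x\|^2\big]$, exchange $\min$ and $\max$ by a standard saddle‑point argument (the inner minimization in $y$ is unconstrained quadratic and solved in closed form by $y = x - z$), and read off that the optimal $z$ equals $\prox_{\Psi^*}(x)$ while the optimal $y$ equals $x - z$; this yields the decomposition directly. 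Either way, the argument is short once the conjugacy machinery is in place.
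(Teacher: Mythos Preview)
Your argument is correct and is the standard route to the Moreau decomposition: characterize $\prox_\Psi(x)$ by its optimality condition $x-u\in\partial\Psi(u)$, invert via Fenchel--Young to get $u\in\partial\Psi^*(x-u)$, and recognize this as the optimality condition for $\prox_{\Psi^*}(x)$. Note, however, that the paper does not supply its own proof of this lemma at all; it is simply quoted from \cite{Rockafellar06}, so there is no in-paper argument to compare against. Your proof is a faithful reconstruction of the classical one found there.
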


   From Chapter $1$ of \cite{Rockafellar06}, we also see that $\prox_{\Psi}(\cdot)$ and $\prox_{\Psi^*}(\cdot)$ are nonexpansive
  \begin{equation}
  \label{Eq_proxnonexpansive}
  \|\prox_{\Psi}(y) - \prox_{\Psi}(x)\| \le \|y -x\|, \quad  \mbox{and} \quad \|\prox_{\Psi^*}(y) - \prox_{\Psi^*}(x)\| \le \|y -x\|.
  \end{equation}
  Finally,  from Chapter $1$ of \cite{Rockafellar06} we have
  \begin{equation}
  \label{eq:14}
  \prox_{\Psi^*}(x) \in \partial \Psi (\prox_{\Psi}(x)).
  \end{equation}

\subsection{Block decomposition of $\R^N$}
\label{S_Block_structure}

Let $U \in \mathbb{R}^{N \times  N}$ be a column permutation of the $N \times N$ identity matrix and further let $U = [U_1,U_2,\dots,U_n]$ be a decomposition of $U$ into $n$ submatrices, where $U_i$ is $N \times N_i$ and $\sum_{i=1}^n N_i = N$. It is clear that any vector $x \in \mathbb{R}^N$ can be written uniquely as
$x = \sum_{i=1}^n U_ix^{(i)},$ where $x^{(i)} \in \R^{N_i}$ and block $i$ denotes a subset of $\{ 1,2,\dots,N\}$. Moreover, these vectors are given by
\begin{equation}\label{U_i}x^{(i)} \eqdef U_i^Tx.\end{equation}

\subsection{Block decomposition of $\Psi$}\label{S_Psi}
The function $\Psi:\R^N \to \R\cup \{+\infty\}$ is assumed to be block separable. That is, we assume that $\Psi(x)$ can be decomposed as:
\begin{equation}
     \label{S2_separable_psi}
\Psi(x) = \sum_{i=1}^n \Psi_i (x^{(i)}),
\end{equation}
where the functions $\Psi_i: \R^{N_i} \to \R\cup \{+\infty\}$ are convex.

Notice that if $n = N$, $\Psi(x)$ is said to be separable (into coordinates), whereas if $n < N$, then $\Psi(x)$ is said to be \emph{block} separable (separable into blocks of coordinates).

The following relationship will be used repeatedly in this work:
   \begin{eqnarray}\label{Eq_PsivsPsii}
   \notag
  \Psi(x+U_it\ii) - \Psi(x) &=& \Big(\sum_{j\neq i}\Psi_j (x^{(j)})+ \Psi_i(x\ii+ t\ii)\Big)-\Big(\sum_{j\neq i}\Psi_j (x^{(j)}) + \Psi_i(x\ii)\Big)\\
   &=& \Psi_i(x\ii+t\ii)-\Psi_i(x\ii).
\end{eqnarray}

\subsection{Block Lipschitz continuity of $f$}

Throughout the paper we assume that the gradient of $f$ is block  Lipschitz, uniformly in $x$. This means that, for all $x \in \R^N$, $i\subseteq \{ 1,2,\dots,n\}$ and $t\ii \in \R^{N_i}$ we have
\begin{equation}
\label{S2_Lipschitz}
     \| \nabla_{i} f(x + U_it\ii) - \nabla_{i} f(x) \| \leq \Lip_{i} \|t\ii\|,
\end{equation}
where $ \nabla_{i} f(x)  \overset{\eqref{U_i}}{=} U_i^T\nabla f(x)$. An important consequence of \eqref{S2_Lipschitz} is the following standard inequality \cite[p.57]{Nesterov04}:
\begin{equation}
\label{S2_upperbound}
     f(x+ U_it\ii) \leq f(x) + \langle \nabla_{i} f(x), t\ii \rangle+ \tfrac{\Lip_{i}}{2}\|t\ii\|^2.
\end{equation}

\subsection{Piecewise Quadratic Model}

For fixed $x \in\R^N$, we define a piecewise quadratic approximation of $F$ around the point $(x + t)\,\in \R^N$ as follows:
\begin{equation}
\label{Def_Q}
  F(x + t)\approx Q(x;t) \eqdef  f(x) + \sum_{i=1}^n Q_i(x,t\ii),
\end{equation}
where
\begin{equation}\label{Def_Qi}
  Q_i(x,t\ii) \eqdef \langle \nabla_i f(x), t\ii \rangle + \frac12 \|t\ii\|_{H\ii(x)}^2 + \Psi_i(x\ii + t\ii),
\end{equation}
and $H\ii(x) \in \R^{N_i \times N_i}$ is \emph{any} positive definite matrix, which possibly depends on $x$. Notice that $Q(x;0) = F(x)$ and that $Q_i(x,t\ii)$ is the quadratic model for block $i$.

\subsection{Stationarity conditions}
The following theorem gives the equivalence of some stationarity conditions of problem \eqref{Def_F}.
\begin{theorem}
\label{thm:optimality_conditions}
The following are equivalent first order optimality conditions of problem \eqref{Def_F}.
\begin{enumerate}
\item[(i)] $\nabla f(x) + s=0$ and $s\in \partial \Psi(x)$,
\item[(ii)] $-\nabla f(x)\in \partial \Psi(x)$,
\item[(iii)] $\nabla f(x) + \frac{1}{\beta}\prox_{(\beta\Psi)^*}\left(x - \beta \nabla f(x) \right) = 0$,
\item[(iv)] $x = \prox_{\beta\Psi}\left(x - \beta\nabla f(x) \right)$,
\end{enumerate}
where $\beta$ is any positive constant.
\end{theorem}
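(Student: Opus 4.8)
The plan is to prove the four statements equivalent through a short cycle centred on condition (iv): I would establish (i) $\Leftrightarrow$ (ii), then (ii) $\Leftrightarrow$ (iv), and finally (iii) $\Leftrightarrow$ (iv). The first of these is immediate: in (i) one necessarily has $s = -\nabla f(x)$, which turns the inclusion $s\in\partial\Psi(x)$ into (ii); conversely, given (ii), the choice $s := -\nabla f(x)$ satisfies both requirements of (i).

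For (ii) $\Leftrightarrow$ (iv) I would use the Fermat optimality condition for the strongly convex problem defining the proximal map. Since the objective $y \mapsto \beta\Psi(y) + \tfrac12\|y - z\|^2$ is a sum of a convex function and a differentiable convex function, $y = \prox_{\beta\Psi}(z)$ if and only if $0 \in \beta\,\partial\Psi(y) + (y - z)$, i.e. $z - y \in \beta\,\partial\Psi(y)$. Substituting $z = x - \beta\nabla f(x)$ and $y = x$, statement (iv) reads $-\beta\nabla f(x) \in \beta\,\partial\Psi(x)$, and dividing through by $\beta>0$ gives precisely (ii); the chain of ``if and only if''s makes this an equivalence.

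For (iii) $\Leftrightarrow$ (iv) I would apply the Moreau decomposition of Lemma \ref{moreau} to the convex function $\beta\Psi$, obtaining $z = \prox_{\beta\Psi}(z) + \prox_{(\beta\Psi)^*}(z)$ for all $z$. Evaluating at $z = x - \beta\nabla f(x)$ and using (iv) to replace $\prox_{\beta\Psi}(x - \beta\nabla f(x))$ by $x$ leaves $-\beta\nabla f(x) = \prox_{(\beta\Psi)^*}(x - \beta\nabla f(x))$, which is (iii) after dividing by $\beta$; running the same computation backwards gives the converse. I do not expect a genuine obstacle in any of this --- the routine care required is to keep track of the positive scalar $\beta$ inside the proximal operators and conjugates, and to note the standard fact that the subdifferential sum rule is valid when one summand is differentiable, which is what justifies the prox characterization $z - \prox_{\beta\Psi}(z) \in \beta\,\partial\Psi\big(\prox_{\beta\Psi}(z)\big)$ used above (compare also \eqref{eq:14}).
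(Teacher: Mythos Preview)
Your proposal is correct and follows essentially the same route as the paper: both arguments obtain (i) $\Leftrightarrow$ (ii) trivially, use the Moreau decomposition (Lemma~\ref{moreau}) for (iii) $\Leftrightarrow$ (iv), and link (ii) to (iv) via the Fermat optimality condition characterizing the proximal map. The only cosmetic difference is that the paper phrases the last link as ``(ii) $\Leftrightarrow$ (iii)'' but in fact passes through (iv) exactly as you do.
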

\begin{proof}
It is easy to see that $(i)$ are first-order optimality conditions of problem \eqref{Def_F}, which can be obtained by using the definition of subgradient.
It is trivial to show that $(i) \Longleftrightarrow (ii)$. By Lemma \ref{moreau}, we have that $(iii) \Longleftrightarrow (iv)$. We now show that $(iii) \Longleftrightarrow (ii)$.
We rewrite $(ii)$ as
\begin{equation*}
0\in \beta \nabla f(x) + y - x + \beta \partial \Psi(x) \quad \mbox{and} \quad y=x,
\end{equation*}
which is satisfied if and only if $(iv)$ holds, hence, if and only if $(iii)$ holds.
\end{proof}

Let us define the continuous function
\begin{equation}\label{defeq:1}
g(x;t) \eqdef \nabla f(x) + H(x) t + \frac{1}{\beta}\prox_{(\beta\Psi)^*}\big(x + t - \beta (\nabla f(x) + H(x) t) \big),
\end{equation}
where $\beta$ is a positive constant, which is used in the local convergence analysis (Section \ref{S_LocalConvergence}). By Theorem \ref{thm:optimality_conditions}, the points that satisfy $g(x;0) = \nabla f(x) + \frac1\beta \prox_{(\beta\Psi)^*}\left(x - \beta \nabla f(x)\right)=0$ are stationary points for problem \eqref{Def_F}. Hence, $g(x;0)$ is a continuous measure of the distance from the set of stationary points of problem \eqref{Def_F}.

Furthermore, let us define
\begin{eqnarray}
\label{defeq:2}
g_i(x;t\ii) &\eqdef& \nabla_i f(x) + H\ii (x)t\ii  \\
               &          &+ \frac{1}{\beta}\prox_{(\beta\Psi)^*_i}\big(x\ii + t\ii - \beta(\nabla_i f(x) + H\ii(x)t\ii) \big),\notag
\end{eqnarray}
which will be used as a continuous measure for the distance from stationarity of the block piecewise quadratic function $Q_i(x_k;t\ii)$.

\section{The Algorithm}
\label{S_Algorithm}
In this section we  present the Robust Coordinate Descent (RCD) algorithm for solving problems of the form \eqref{Def_F}. There are three key steps in the algorithm: (step $4$) the coordinates are sampled randomly; (step $5$) the quadratic model \eqref{Def_Qi} is solved approximately until the stopping conditions \eqref{Def_stoppingconditions} are satisfied to give a search direction; (step $6$) a line search is performed to find a step size that ensures a sufficient reduction in the objective value. Once these key steps have been performed, the current point $x_k$ is updated to give a new point $x_{k+1}$, and the process is repeated.

The following assumption is used in RCD. The reason this assumption is used will be made clear in Section \ref{S_blockstructure}.
\begin{assumption}
\label{Assump_sampling}
  The block decomposition of $\R^N$ used within RCD, and the associated probability distribution, adhere to the block structure of $\Psi(x)$.
\end{assumption}

We now present pseudocode for the algorithm, while a thorough description of each of the key steps in the algorithm will follow in the rest of this section.

\begin{algorithm}[H]
\begin{algorithmic}[1]
\vspace{0.1cm}
\STATE \textbf{Input} Choose $x_0\in \R^N$, $\theta \in (0,1/2)$ and $\beta>0$. 
\STATE{\textbf{Initialize} a decomposition of $\R^N$ and a probability distribution following Assumption~\ref{Assump_sampling}}\vspace{2mm}
\FOR{$k=1,2,\cdots$} \vspace{2mm}
\STATE Sample a block of coordinates $i$ with probability $p_i >0$.\\ \vspace{2mm}
\STATE If $g_i(x_k;0)=0$ then go to Step $3$; else approximately solve \begin{equation}\label{eq_subproblem}
t_k\ii \eqdef \argmin_{t\ii} Q_i(x_k;t\ii),
\end{equation}
until the stopping conditions
\begin{equation}\label{Def_stoppingconditions}
Q(x_k;U_it\ii_k) < Q(x_k;0) \quad \mbox{and} \quad \|g_i(x_k;t\ii_k)\| \le \eta_k^i \|g_i(x_k;0)\|,
\end{equation}
are satisfied, (where $\eta_k^i \in [0,1)$).
\STATE Perform a backtracking line search along the direction $t_k\ii$ starting from $\alpha=1$. That is, find $\alpha\in(0,1]$ such that
\begin{equation}\label{Def_linesearch}
F(x_k) - F(x_k+\alpha U_it_k\ii) \ge \theta \left(\ell(x_k;0) - \ell(x_k;\alpha U_it_k\ii)\right),
\end{equation}
where
\begin{equation}\label{Def_lossfunctionli}
\ell(x_k; t) \eqdef f(x_k) + \langle \nabla f(x_k), t \rangle + \Psi(x_k + t).
\end{equation}
\STATE Update $x_{k+1} = x_k + \alpha U_i t_k\ii$
\ENDFOR
\end{algorithmic}
\caption{Robust Coordinate Descent (RCD)}
\label{RCD}
\end{algorithm}


\subsection{Block structure and selection of coordinates (Steps $\boldsymbol 2$ \& $\boldsymbol 4$)}
\label{S_blockstructure}
One of the crucial ideas of this algorithm is that the block of coordinates to be updated at each iteration is chosen \emph{randomly}. This allows the coordinates to be selected very quickly. In this section we explain in detail, how the blocks are selected/sampled at each iteration. We also give examples of how coordinates can be randomly sampled such that Assumption \ref{Assump_sampling} is satisfied.

\subsubsection{$\Psi$ is block separable with $n < N$}

When $\Psi$ has a fixed block structure (i.e., $n < N$), the block decomposition of $\R^N$ (via the matrix $U = [U_1,\dots,U_n]$) described in Section \ref{S_Block_structure} is fixed at the start of the algorithm to coincide with the block structure of $\Psi$, and does not change as iterations progress. There are several ways to initialize a sampling scheme to use in RCD that follow Assumption \ref{Assump_sampling}.

\begin{enumerate}
  \item Fix the $n$ blocks of coordinates according to the decomposition of $\R^N$ defined by $U$, and in the algorithm, select each block of coordinates with some probability $p_i$. (e.g., uniform probabilities $p_i = 1/n >0$ for all $i = 1,\dots,n$).
  \item Perform (single) coordinate descent, where at each iteration of RCD, the coordinate $i$ is selected with some probability $p_i$ (e.g., uniform probabilities $p_i = 1/N$ for all $i$).
  \item Perform block coordinate descent, where each block of coordinates has cardinality $N_{\min} \eqdef \min\{N_1,\dots,N_n\}$. The restriction is that, at any iteration $k$, the sampled coordinates forming block $i$, must all belong to the same block of $N_j$ coordinates defined by submatrix $U_j$. (i.e., Assumption \ref{Assump_sampling} is satisfied because the decomposition of $U$ is obeyed.) Recall that $\Psi$ is separable into $n$ blocks. Let the total number of subblocks be $l(>n)$, where we assume that each coordinate $1,\dots,N$ appears in at least one of the $l$ blocks. Then each subblock is selected with probability $p_i$.
\end{enumerate}

\subsubsection{$\Psi$ is separable with $n=N$}

When $\Psi$ is separable into coordinates, we have complete control over the indices that are updated at each iteration.

Let $\tau$ denote the block size (number of coordinates that are updated at any iteration $k$), where $1\leq \tau \leq N$. Note that there are $^NC_{\tau}$ subsets\footnote{Here $^NC_{\tau}$ denotes the usual `N choose $\tau$'. i.e., $^NC_{\tau} = N!/(\tau!(N-\tau))!$} of $\tau$ coordinates that can be made from the set $\{1,\dots,N\}$. At any iteration $k$ of RCD, a subset of coordinates $i_k$ with $|i_k| = \tau$ is sampled with some probability $p_i$ (e.g., uniform probabilities $p_{i} = 1/^NC_{\tau} >0$ for all $i$).
Note that in practice, one never explicitly forms the $^NC_{\tau}$ different blocks in order to randomly pick one with some probability $p_i$. Instead, $\tau$ coordinates are sampled randomly without replacement.

\subsection{The search direction and Hessian approximation (Step $\boldsymbol 5$)}

In this section we describe how RCD determines the search direction. In particular, RCD forms a quadratic model for block $i$, and minimizes the model approximately until the stopping conditions \eqref{Def_stoppingconditions} are satisfied, giving an `inexact' search direction.

We also describe the importance of the choice of matrix $H$, which is an approximate second order information term. From now on, we will often use the shorthand $H_k\ii \equiv H\ii(x_k)$.

\subsubsection{The search direction}

At each iteration the update/search direction is found as follows. The subproblem \eqref{eq_subproblem}, (where $Q_i(x_k;t\ii)$ is defined in \eqref{Def_Qi}) is approximately solved, and the search direction $t_k\ii$ is accepted when the stopping conditions \eqref{Def_stoppingconditions} are satisfied, for some $\eta_k^i \in [0,1)$. Notice that
\begin{eqnarray}\label{eq:1}
  Q(x;U_i t\ii) - Q(x;0) &\overset{\eqref{Def_Q}}{=}& \langle \nabla_i f(x), t\ii \rangle + \frac12 \|t\ii\|_{H\ii}^2 + \Psi(x + U_it\ii) - \Psi(x) \nonumber \\
  &\overset{\eqref{Eq_PsivsPsii}}{=}& \langle \nabla_i f(x), t\ii \rangle + \frac12 \|t\ii\|_{H\ii}^2 + \Psi_i(x\ii + t\ii) - \Psi_i(x\ii).
\end{eqnarray}
Hence, from \eqref{eq:1}, the stopping conditions \eqref{Def_stoppingconditions} depend on block $i$ only, and are therefore inexpensive to verify, meaning that they are \emph{implementable}.
\begin{remark}\label{Remark_tineq0}\quad
\begin{itemize}
  \item[(i)] At some iteration $k$, it is possible that $g_i(x_k;0)=0$. In this case, it is easy to verify that the optimal solution of subproblem \eqref{eq_subproblem} is $t\ii_k=0$. Therefore, before calculating $t\ii_k$ we check a-priori if condition $g_i(x_k;0)=0$ is satisfied.
  \item[(ii)] Notice that, unless at optimality (i.e., $g(x_k;0)=0$), there will always be blocks $i$ such that $g_i(x_k;0)\neq 0$, which implies that $t\ii_k \neq 0$. Hence, RCD will not stagnate.
  \item[(iii)] Following similar arguments as those made in \cite[p.4]{sqa}, we prove this in Lemma \ref{lem:Fdec} that both conditions are required to ensure that $t_k\ii$ is a descent direction.
\end{itemize}
\end{remark}

\subsubsection{The Hessian approximation}
\label{S_HessianApprox}
Arguably, them most important feature of this method is that the quadratic model \eqref{Def_Qi} incorporates second order information in the form of a positive definite matrix $H_k\ii$. This is key because, depending upon the choice of $H_k\ii$, it makes the method robust. Moreover, at each iteration, the user has complete freedom over the choice of $H_k\ii\succ0$.

We now provide a few suggestions for the choice of $H_k\ii$. (This list is not intended to be exhaustive.) Notice that in each case there is a trade off between a matrix that is inexpensive to work with, and one that is a more accurate representation of the true block Hessian.
\begin{enumerate}
\item Clearly, the simplest option is to set $H_k\ii = I$ for all $i$ and $k$. In this case \emph{no second order information is employed by the method.}
  \item A second option is to let $H_k\ii = \text{diag}(\nabla_i^2 f(x_k))$. In this case $H_k\ii$ and it's inverse are inexpensive to work with. Moreover, if $f$ is quadratic, then $\nabla^2 f(x_k)$ is constant for all $k$, so $H = \text{diag}(\nabla^2 f(x))$ can be computed and stored at the start of the algorithm and elements can be accessed throughout the algorithm as necessary. This is very effective if $\text{diag}(\nabla^2 f(x))$ is a good approximation to $\nabla^2 f(x)$.
  \item A third option is to let $H_k\ii = \nabla_i^2 f(x_k)$ (i.e., $H_k\ii$ is a principal minor of the Hessian). In this case, $H_k\ii$ provides the most accurate second order information, but it is (potentially) more computationally expensive to work with.
  In practice the matrix $\nabla_i^2 f(x_k)$ is used in a matrix-free way and is not explicitly stored. For example, there may be an analytic formula for performing matrix-vector products with $\nabla_i^2 f(x_k)$, or techniques from automatic differentiation could be employed, see \cite[Section $7$]{IEEEhowto:wrightbook2}.

  \item Another option is to use a quasi-Newton type approach where $H_k\ii$ is an approximation to $\nabla_i^2 f(x_k)$ based on the limited-memory BFGS update scheme, see \cite[Section $8$]{IEEEhowto:wrightbook2}.
  This approach might be more suitable in cases that the problem is not very ill-conditioned and additionally performing matrix-vector products with $\nabla_i^2 f(x_k)$ is expensive.
\end{enumerate}

\begin{remark}\quad
  If any of the matrices above are not positive definite, then they can be altered to make them so. For example, if $H_k\ii$ is diagonal, any zero that appears on the diagonal can be replaced with a positive number. Moreover, if $\nabla_i^2 f(x_k)$ is not positive definite, a multiple of the identity can be added to it.
\end{remark}

An advantage of the RCD algorithm (if Option 3 is used for $H_k\ii$) is that \emph{all elements of the Hessian can be accessed.} This is because the blocks of coordinates can change at every iteration, and so too can matrix $H_k\ii$. This makes RCD extremely \emph{flexible} and is particularly advantageous when there are large off diagonal elements in the Hessian.

\subsection{The line search (Step $\boldsymbol 6$)}
\label{subsec:pract_line}

The stopping conditions \eqref{Def_stoppingconditions} ensure that $t_k\ii$ is a descent direction, but if the full step $x_k + U_it_k\ii$ is taken, a reduction in the function value \eqref{Def_F}  is not guaranteed. To this end, we include a line search step in our algorithm in order to guarantee monotonic decrease of function $F$. Essentially, the line search guarantees the sufficient decrease of $F$ at every iteration, where sufficient decrease is measured by the loss function \eqref{Def_lossfunctionli}.

In particular, for fixed $\theta \in (0,1/2)$, we require that for some $\alpha \in (0,1]$, \eqref{Def_linesearch} is satisfied. (In Lemma \ref{lem:Fdec} we prove that there exists a subinterval $(0,\tilde{\alpha}]$ of $(0,1]$ in which \eqref{Def_linesearch} is satisfied.) Notice that
\begin{eqnarray}
\notag
  \ell(x;U_it\ii)-\ell(x;0) &\overset{\eqref{Def_lossfunctionli}}{=}& \langle \nabla_i f(x),t\ii \rangle + \Psi(x + U_it\ii) - \Psi(x)\\
  \label{Eq_differenceloss}
  &\overset{\eqref{Eq_PsivsPsii}}{=}& \langle \nabla_i f(x),t\ii\rangle  + \Psi_i(x\ii + t\ii) - \Psi_i(x\ii),
\end{eqnarray}
which shows that the calculation of the right hand side of \eqref{Def_linesearch} only depends upon block $i$, so it is inexpensive. Moreover, the line search condition (Step 5) involves the difference between function values $F(x_k) - F(x_k + \alpha U_i t_k\ii)$. Fortunately, while function values can be expensive to compute, the difference in the objective value between iterates need not be (this is discussed in more detail in Section \ref{S_Examples}).

\subsection{Examples}
\label{S_Examples}
In this section we provide several examples to demonstrate the practicality of the algorithm. These examples demonstrate that  the difference of function values $F(x_k) - F(x_k + \alpha U_i t_k\ii)$
required by the line search conditions \eqref{Def_linesearch}, can be easy/inexpensive to implement and verify.

\subsubsection{Quadratic loss plus regularization example}

Suppose that
$f(x)  = \tfrac12\|Ax-b\|^2$ and $\Psi(x) \neq 0,$
where $A\in\mathbb{R}^{m\times N}$, $b\in\mathbb{R}^m$ and $x \in \R^N$.
Then
\begin{eqnarray}
\label{Eq_Fvaldiff}
  F(x_k) - F(x_k+\alpha U_i t_k\ii) &\overset{\eqref{Eq_PsivsPsii}}{=}& f(x_k) + \Psi_i(x\ii)  \\ \notag
       && - f(x_k+\alpha U_i t_k\ii)  - \Psi_i(x_k\ii+\alpha t_k\ii) \\ \notag
  &=& \Psi_i(x\ii)- \alpha \langle \nabla_i f(x), t_k\ii \rangle - \frac{\alpha^2}2\|A_it_k\ii\|_2^2 \\\notag
  && - \Psi_i(x_k\ii+\alpha t_k\ii).
\end{eqnarray}
Notice that calculation of $F(x_k) - F(x_k+\alpha U_i t_k\ii)$ as a function of $\alpha$ only depends on block $i$, hence, it is inexpensive.
Moreover, in some cases some of the quantities in \eqref{Eq_Fvaldiff} are already needed in the computation of the search direction $t$, so regarding the line search step, they essentially come ``for free''.

\subsubsection{Logistic regression example}
Suppose that
$$f(x) \equiv  \sum_{j=1}^m \log(1 + e^{-b_j a_j^T x}) \quad \mbox{and} \quad \Psi(x) \neq 0,$$
where $a_j^T$ is the $j$th row of a matrix $A\in\mathbb{R}^{m\times n}$
and $b_j$ is the $j$th component of vector $b\in\mathbb{R}^m$. As before, we need to evaluate \eqref{Eq_Fvaldiff}. Let us split calculation of $F(x_k) - F(x_k+\alpha U_i t_k\ii) $ in parts. The first part $\Psi_i(x\ii)- \Psi_i(x_k\ii+\alpha t_k\ii) $ is inexpensive, since it depends only on block $i$. The second part $f(x_k) - f(x_k+\alpha U_i t_k\ii)$ is more expensive because
is depends upon the logarithm.
In this case, one can calculate $f(x_0)$ \textit{once} at the beginning of the algorithm and then update $f(x_k+\alpha U_i t_k\ii)$ $\forall k\ge1$ less expensively.
In particular, let us assume that the following terms:
\begin{equation}
\label{inner_prod_log}
e^{-b_j a_j^Tx_0 } \quad \forall j \quad \mbox{and} \quad f(x_0)=\sum_{j=1}^m \log(1 + e^{-b_j a_j^Tx_0}),
\end{equation}
 are calculated once and stored in memory.
Then, at iteration $1$,  the calculation of
$f(x_0 + \alpha U_i t\ii_0) = \sum_{j=1}^m \log(1 + e^{-b_j a_j^Tx_0}e^{-\alpha b_j a_j^T(U_i t\ii_0)})$
is required for different values of $\alpha$ by the backtracking line search algorithm.
The most demanding task in calculating $f(x_0 + \alpha U_i t\ii_0)$ is the calculation of the products $b_j a_j^T(U_i t\ii_0)$ $\forall j$ \textit{once}, which is inexpensive since $\forall j$ this operation
depends only on block $i$. Having $b_j a_j^T(U_i t\ii_0)$ $\forall j$ and \eqref{inner_prod_log} calculation of $f(x_0) - f(x_0 + \alpha U_i t\ii_0)$ for different values of $\alpha$ is inexpensive.
At the end of the process, $f(x_1)$ and $e^{-b_j a_j^T x_1}$ $\forall j $ will be given for free, and the same process can be followed for the calculation of $f(x_1) - f(x_1 + \alpha U_i t\ii_1)$ etc.

\section{Global convergence theory without convexity of $f$}
\label{S_GlobalConvergence}
In this section we provide global convergence theory for the RCD algorithm. Note that we \emph{do not assume that $f$ is convex}.
Throughout this section we denote $H\ii_k \equiv H\ii(x_k)$. The following assumptions are made about $H_k\ii$ and $f$.

\begin{assumption}\label{Assump_HisPD}
  There exist constants $0 < \lambda_i \leq \Lambda_i$, such that the sequence $\{H_k\ii\}_{k\geq 0}$ satisfies
  \begin{equation}\label{Assumption_lambdai}
    0 < \lambda_i \leq \lambda_{\min}(H_k\ii) \quad \text{and} \quad \lambda_{\max}(H_k\ii) \leq \Lambda_i, \quad \text{for all } i \text{ and } k.
  \end{equation}
\end{assumption}

\begin{assumption}\label{Assump_fisLipschitz}
  The function $f$ is smooth, bounded below, and satisfies \eqref{S2_Lipschitz} for all $i$.
\end{assumption}

Assumption \ref{Assump_HisPD} explains that the Hessian approximation $H_k\ii$ must be positive definite for all blocks $i$ at all iterations $k$. Assumption \ref{Assump_fisLipschitz} explains that $f$ must be block Lipschitz for all blocks $i$ and all iterations $k$.

Before proving global convergence of RCD, we present several technical results. The following lemma shows that if $t\ii_k$ is nonzero, then $F$ is decreased.

\begin{lemma}
\label{lem:Fdec}
  Let Assumptions \ref{Assump_HisPD} and \ref{Assump_fisLipschitz} hold. Let $\theta\in(0,1/2)$ and let $x_k$ and $i$ be generated by RCD. Then Step 6 of RCD will accept a step-size $\alpha$ that satisfies
  \begin{equation}\label{Eq_alphamin}
    \alpha \ge \tilde{\alpha} \qquad \text{where} \qquad \tilde{\alpha}\eqdef(1-\theta)\frac{\lambda_i}{2L_i}.
  \end{equation}
   Furthermore,
  \begin{equation}\label{Eq_F_ubont}
    F(x_k) - F(x_k + \alpha U_it\ii_k)  > \theta(1-\theta)\frac{\lambda_i^2}{4L_i}\|t\ii_k\|^2.
  \end{equation}
\end{lemma}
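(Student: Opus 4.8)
The plan is to establish the two claims of Lemma~\ref{lem:Fdec} in sequence, with the existence of an acceptable step size \eqref{Eq_alphamin} coming first and the sufficient-decrease bound \eqref{Eq_F_ubont} following from it. The starting point is the first stopping condition in \eqref{Def_stoppingconditions}, namely $Q(x_k;U_it_k\ii) < Q(x_k;0)$; together with the identity \eqref{eq:1} this tells us that
\begin{equation*}
\langle \nabla_i f(x_k),t_k\ii\rangle + \tfrac12\|t_k\ii\|_{H_k\ii}^2 + \Psi_i(x_k\ii+t_k\ii) - \Psi_i(x_k\ii) < 0.
\end{equation*}
Using the lower eigenvalue bound from Assumption~\ref{Assump_HisPD}, $\|t_k\ii\|_{H_k\ii}^2 \ge \lambda_i\|t_k\ii\|^2$, this rearranges to
\begin{equation*}
\langle \nabla_i f(x_k),t_k\ii\rangle + \Psi_i(x_k\ii+t_k\ii) - \Psi_i(x_k\ii) < -\tfrac{\lambda_i}{2}\|t_k\ii\|^2,
\end{equation*}
i.e. $\ell(x_k;U_it_k\ii) - \ell(x_k;0) < -\tfrac{\lambda_i}{2}\|t_k\ii\|^2$ by \eqref{Eq_differenceloss}. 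This is the key ``descent in the model'' inequality that drives everything.

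Next I would bound $F(x_k+\alpha U_it_k\ii)$ from above for a general $\alpha\in(0,1]$. Splitting $F = f + \Psi$, I apply the block Lipschitz inequality \eqref{S2_upperbound} to the smooth part with step $\alpha t_k\ii$, and I use convexity of $\Psi_i$ to get $\Psi_i(x_k\ii+\alpha t_k\ii) \le (1-\alpha)\Psi_i(x_k\ii) + \alpha\Psi_i(x_k\ii+t_k\ii)$, together with \eqref{Eq_PsivsPsii}. Combining these and rewriting in terms of $\ell$ yields something of the shape
\begin{equation*}
F(x_k+\alpha U_it_k\ii) - F(x_k) \le \alpha\big(\ell(x_k;U_it_k\ii)-\ell(x_k;0)\big) + \tfrac{L_i\alpha^2}{2}\|t_k\ii\|^2.
\end{equation*}
Feeding in the model-descent inequality from the first paragraph, the right side is at most $-\alpha\tfrac{\lambda_i}{2}\|t_k\ii\|^2 + \tfrac{L_i\alpha^2}{2}\|t_k\ii\|^2$. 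I then want this to be $\le \theta\alpha\big(\ell(x_k;U_it_k\ii)-\ell(x_k;0)\big)$, which is the line search condition \eqref{Def_linesearch} written as an upper bound on $F(x_k+\alpha U_it_k\ii)-F(x_k)$; since $\ell(x_k;U_it_k\ii)-\ell(x_k;0) < -\tfrac{\lambda_i}{2}\|t_k\ii\|^2 < 0$, it suffices to show $-\alpha + \tfrac{L_i\alpha^2}{\lambda_i} \le -\theta\alpha$ after dividing through appropriately, equivalently $\tfrac{L_i\alpha}{\lambda_i}\le 1-\theta$, i.e. $\alpha\le(1-\theta)\tfrac{\lambda_i}{L_i}$. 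Being a little more careful with the constant (the factor $\tfrac12$ from \eqref{S2_upperbound} vs. where $\theta$ multiplies) gives the stated threshold $\tilde\alpha = (1-\theta)\tfrac{\lambda_i}{2L_i}$; since backtracking starts at $\alpha=1$ and halves, it will accept some $\alpha\ge\tilde\alpha$ (one must note $\tilde\alpha<1$, which holds because $\theta\in(0,1/2)$ and $\lambda_i\le L_i$ — the latter is standard since $L_i$ dominates the curvature of $f$; if not, a one-line remark handles $\tilde\alpha \ge 1$ and $\alpha=1$ is accepted outright).

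For the second claim, once $\alpha\ge\tilde\alpha$ is accepted, I plug the accepted $\alpha$ back into the line search inequality \eqref{Def_linesearch} and use $\ell(x_k;0)-\ell(x_k;U_it_k\ii) > \tfrac{\lambda_i}{2}\|t_k\ii\|^2$ from the first paragraph, obtaining
\begin{equation*}
F(x_k) - F(x_k+\alpha U_it_k\ii) \ge \theta\alpha\big(\ell(x_k;0)-\ell(x_k;U_it_k\ii)\big) > \theta\,\tilde\alpha\,\tfrac{\lambda_i}{2}\|t_k\ii\|^2 = \theta(1-\theta)\tfrac{\lambda_i^2}{4L_i}\|t_k\ii\|^2,
\end{equation*}
which is exactly \eqref{Eq_F_ubont}. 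The main obstacle I anticipate is purely bookkeeping: getting the numerical constant in $\tilde\alpha$ to come out as $(1-\theta)\lambda_i/(2L_i)$ rather than $(1-\theta)\lambda_i/L_i$ requires tracking precisely where the $\tfrac12$ in \eqref{S2_upperbound} interacts with the $\theta$-weighted comparison, and handling the edge case $\tilde\alpha\ge1$ cleanly. There is no deep difficulty — the strict inequality in the first stopping condition is what ultimately produces the strict inequality in \eqref{Eq_F_ubont}, so I must be careful to preserve strictness throughout (in particular the first stopping condition gives a strict ``$<$'', and $t_k\ii\ne0$ is implicit since otherwise $Q(x_k;0)<Q(x_k;0)$ is impossible).
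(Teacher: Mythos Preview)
Your overall strategy is the same as the paper's, and your derivation of \eqref{Eq_F_ubont} at the end is fine. But there is a genuine gap in the first part: you have misread the line search condition. The right-hand side of \eqref{Def_linesearch} is $\theta\big(\ell(x_k;0)-\ell(x_k;\alpha U_it_k\ii)\big)$, with $\alpha$ \emph{inside} the argument of $\ell$; it is \emph{not} $\theta\alpha\big(\ell(x_k;0)-\ell(x_k;U_it_k\ii)\big)$. By convexity \eqref{Eq_ell_convexity} one has $\ell(x_k;0)-\ell(x_k;\alpha U_it_k\ii)\ge \alpha\big(\ell(x_k;0)-\ell(x_k;U_it_k\ii)\big)$, so the true line search requirement is \emph{stronger} than the one you verify. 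Concretely, you apply convexity of $\Psi_i$ too early: once you replace $\Psi_i(x_k\ii+\alpha t_k\ii)$ by its chord bound, your $F$-upper bound no longer contains $\ell(x_k;\alpha U_it_k\ii)$, and you cannot compare against the actual right-hand side of \eqref{Def_linesearch} without an upper bound on $\ell(x_k;0)-\ell(x_k;\alpha U_it_k\ii)$, which convexity does not provide.

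The fix is a reordering, exactly as the paper does: use \eqref{S2_upperbound} but keep $\Psi(x_k+\alpha U_it_k\ii)$ as is, obtaining
\[
F(x_k)-F(x_k+\alpha U_it_k\ii)\;\ge\;\ell(x_k;0)-\ell(x_k;\alpha U_it_k\ii)-\tfrac{L_i}{2}\alpha^2\|t_k\ii\|^2.
\]
Now subtract $\theta\big(\ell(x_k;0)-\ell(x_k;\alpha U_it_k\ii)\big)$ from both sides, leaving a $(1-\theta)$ coefficient on the $\ell$-difference, and \emph{only then} apply convexity \eqref{Eq_ell_convexity} to pass from $\ell(x_k;\alpha U_it_k\ii)$ to $\ell(x_k;U_it_k\ii)$. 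Combined with $\ell(x_k;0)-\ell(x_k;U_it_k\ii)>\tfrac{\lambda_i}{2}\|t_k\ii\|^2$ this gives the threshold $\alpha\le(1-\theta)\lambda_i/L_i$. The extra factor $2$ in $\tilde\alpha$ comes from the halving in backtracking, not from the $\tfrac12$ in \eqref{S2_upperbound} as you suggest.
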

\begin{proof}
The proof closely follows that of \cite[Theorem $3.1$]{sqa}.
From \eqref{Def_stoppingconditions},
  \begin{equation}
  \label{Eq_yo}
    0 > Q(x_k ; U_it_k\ii) - Q(x_k;0) = \ell(x_k;U_it_k\ii) - \ell(x_k;0) + \frac12\|t_k\ii\|_{H_k\ii}^2.
  \end{equation}
  Rearranging gives
  \begin{equation}\label{in:1}
    \ell(x_k;0) - \ell(x_k; U_it_k\ii) > \frac12\|t_k\ii\|_{H_k\ii}^2 \overset{\eqref{Assumption_lambdai}}{\geq} \frac12\lambda_i\|t_k\ii\|^2.
  \end{equation}
  By Assumption \ref{Assump_fisLipschitz}, for $\alpha \in (0,1)$, we have
  \begin{equation*}
    F(x_k + \alpha U_it_k\ii) \leq f(x_k) + \alpha \langle \nabla_i f(x), t_k\ii \rangle + \frac{\Lip_i}{2}\alpha^2 \|t_k\ii\|^2 + \Psi(x_k+\alpha U_it_k\ii).
  \end{equation*}
  Adding $\Psi(x_k)$ to both sides of the above and rearranging gives
  \begin{eqnarray}
  \label{in:3}
  \notag
    F(x_k) - F(x_k + \alpha U_it_k\ii) &\ge& -\alpha \langle \nabla_i f(x_k), t_k\ii \rangle - \frac{\Lip_i}{2}\alpha^2 \|t_k\ii\|^2  \\ \notag
                                                        & & - \Psi(x_k+\alpha U_it_k\ii) + \Psi(x_k) \\
    &\overset{\eqref{Eq_differenceloss}}{=}& \ell(x_k;0) - \ell(x_k ; \alpha U_it_k\ii) - \frac{\Lip_i}{2} \alpha^2 \|t_k\ii\|^2.
  \end{eqnarray}
 By convexity of $\Psi(x)$ we have that
\begin{equation}\label{Eq_ell_convexity}
  \ell(x_k;0) - \ell(x_k;\alpha U_it_k\ii)\ge \alpha(\ell(x_k;0) - \ell(x_k;U_it_k\ii)).
\end{equation}
  Then
  \begin{eqnarray*}
  \label{in:10}
    F(x_k) - F(x_k + \alpha U_it_k\ii) &-& \theta (\ell(x_k;0) - \ell(x_k ; \alpha U_it_k\ii)) \nonumber \\
    &\overset{\eqref{in:3}}{\geq}& (1-\theta) \big(\ell(x_k;0) - \ell(x_k ; \alpha U_it\ii)\big) - \frac{\Lip_i}{2}\alpha^2 \|t_k\ii\|^2  \nonumber \\
    &\overset{\eqref{Eq_ell_convexity}}{\geq}& \alpha(1-\theta) \big(\ell(x_k;0) - \ell_i(x_k ;  U_it_k\ii)\big) - \frac{\Lip_i}{2}\alpha^2 \|t_k\ii\|^2  \nonumber \\
    & \stackrel{\eqref{in:1}}{>}&  \frac12\big(\alpha(1-\theta)\lambda_i \|t_k\ii\|^2 - \Lip_i\alpha^2 \|t_k\ii\|^2\big) \nonumber \\
    & =& \frac{\alpha}{2}\big((1-\theta)\lambda_i - \Lip_i\alpha\big) \|t_k\ii\|^2.
  \end{eqnarray*}
  From the previous observe that if $\alpha$ satisfies $0 \le \alpha \le (1-\theta)\frac{\lambda_i}{L_i}$,
  then $\alpha$ also satisfies the backtracking line search step of RCD. Suppose that any $\alpha$ that is rejected by the line search is halved for the next line search trial. Then, it is guaranteed that the $\alpha$ that is accepted satisfies \eqref{Eq_alphamin}.

   Since the line search condition \eqref{Def_linesearch} is guaranteed to be satisfied for some step size $\alpha$, from \eqref{Def_linesearch}
   and convexity of $\ell$ we obtain
   $F(x_k) - F(x_k + \alpha U_it_k\ii) \ge \theta \alpha (\ell(x_k;0) - \ell(x_k ; U_it_k\ii))$. Using \eqref{in:1} and \eqref{Eq_alphamin} in the last inequality we obtain \eqref{Eq_F_ubont}.
\end{proof}

The following lemma bounds the norm of the direction $t_k\ii$ in terms of $g_i(x_k,0)$.
\begin{lemma}\label{lem:Fdec2}
Let Assumptions \ref{Assump_HisPD} and \ref{Assump_fisLipschitz} hold. For $\beta>0$, and $x_k$ and $i$ generated by RCD, we have
\begin{equation}\label{in:8}
  \|t_k\ii\| \ge \gamma_i \|g_i(x_k;0)\|, \quad \mbox{where} \quad \gamma_i \eqdef \frac{1-\eta_k^i}{\frac{1}{\beta} + 2\Lambda_i},
\end{equation}
where $\eta_k^i$ is defined in \eqref{Def_stoppingconditions}. Moreover,
\begin{equation}\label{Eq_Fubong}
  F(x_k) - F(x_k + {\alpha}U_it_k\ii) > \theta(1-\theta)\frac{\lambda_i^2}{4L_i} \gamma_i^2 \|g_i(x_k;0)\|^2.
\end{equation}
\end{lemma}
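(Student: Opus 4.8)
The proof splits naturally into two parts, matching the two displayed inequalities. For \eqref{in:8}, the plan is to relate $t_k\ii$ to the stationarity measures $g_i(x_k;t_k\ii)$ and $g_i(x_k;0)$ via a triangle inequality: write
\begin{equation*}
\|g_i(x_k;0)\| \le \|g_i(x_k;0) - g_i(x_k;t_k\ii)\| + \|g_i(x_k;t_k\ii)\|,
\end{equation*}
use the second stopping condition in \eqref{Def_stoppingconditions} to absorb the last term as $\eta_k^i\|g_i(x_k;0)\|$, and then estimate $\|g_i(x_k;0) - g_i(x_k;t_k\ii)\|$ from above in terms of $\|t_k\ii\|$. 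Rearranging $(1-\eta_k^i)\|g_i(x_k;0)\| \le \|g_i(x_k;0) - g_i(x_k;t_k\ii)\|$ against that bound will yield \eqref{in:8}.

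The main work — and the step I expect to be the obstacle — is the Lipschitz-type estimate $\|g_i(x_k;0) - g_i(x_k;t_k\ii)\| \le (\tfrac1\beta + 2\Lambda_i)\|t_k\ii\|$. Using the definition \eqref{defeq:2} of $g_i$, the difference has two contributions. The first is $H_k\ii(0 - t_k\ii)$, bounded by $\lambda_{\max}(H_k\ii)\|t_k\ii\| \le \Lambda_i\|t_k\ii\|$ via Assumption \ref{Assump_HisPD}. The second is $\tfrac1\beta$ times the difference of the two $\prox_{(\beta\Psi)^*_i}$ terms; by nonexpansiveness \eqref{Eq_proxnonexpansive} of the proximal map, this is at most $\tfrac1\beta$ times the norm of the difference of the arguments, namely $\tfrac1\beta\big\|\,(0 - t_k\ii) - \beta H_k\ii(0 - t_k\ii)\,\big\| = \tfrac1\beta\|t_k\ii - \beta H_k\ii t_k\ii\| \le (\tfrac1\beta + \Lambda_i)\|t_k\ii\|$. (Here the $\nabla_i f(x_k)$ terms cancel in the arguments, as does the $t$-independent part, since both proxes are evaluated with the same $x_k$.) Adding the two contributions gives the constant $\tfrac1\beta + 2\Lambda_i$, so $\gamma_i = (1-\eta_k^i)/(\tfrac1\beta + 2\Lambda_i)$ as claimed. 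One subtlety to be careful about: $\eta_k^i \in [0,1)$ so $1 - \eta_k^i > 0$ and the division is legitimate; if $g_i(x_k;0) = 0$ then Step 5 already sets $t_k\ii = 0$ and \eqref{in:8} holds trivially, so we may assume $g_i(x_k;0) \neq 0$.

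For \eqref{Eq_Fubong}, the plan is simply to chain \eqref{Eq_F_ubont} from Lemma \ref{lem:Fdec} with the lower bound \eqref{in:8} just established: since $F(x_k) - F(x_k + \alpha U_i t_k\ii) > \theta(1-\theta)\tfrac{\lambda_i^2}{4L_i}\|t_k\ii\|^2$ and $\|t_k\ii\|^2 \ge \gamma_i^2\|g_i(x_k;0)\|^2$, the result follows immediately. This second part is routine; essentially all the content is in the nonexpansiveness estimate above.
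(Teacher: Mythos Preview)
Your proposal is correct and follows essentially the same route as the paper: the paper uses the reverse triangle inequality to obtain $(1-\eta_k^i)\|g_i(x_k;0)\| \le \|g_i(x_k;t_k\ii) - g_i(x_k;0)\|$, then bounds the difference by $\|H_k\ii t_k\ii\| + \tfrac1\beta\|(I-\beta H_k\ii)t_k\ii\| \le (\tfrac1\beta + 2\Lambda_i)\|t_k\ii\|$ via nonexpansiveness of the prox, and finally combines \eqref{in:8} with \eqref{Eq_F_ubont} exactly as you describe.
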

\begin{proof}
This proof closely follows that of \cite[Theorem $3.1$]{sqa}.
  Using the reverse triangular inequality and the fact that $\prox_{\Psi^*_i}(\cdot)$ is nonexpansive, we have that
  \begin{eqnarray*}
  (1-\eta_k^i)\|g_i(x_k;0)\| & \stackrel{\eqref{Def_stoppingconditions}}{\le}& \|g_i(x_k;0)\| - \|g_i(x_k;t\ii_k)\| \\
  & \le& \|g_i(x_k;t_k\ii) - g_i(x_k;0) \| \\
  & \stackrel{\eqref{defeq:2}}{=} & \|H_k\ii t_k\ii + \tfrac{1}{\beta}\prox_{(\beta\Psi)^*_i}\big(x_k\ii + t_k\ii - \beta(\nabla_i f(x_k) + H_k\ii t_k\ii)  \big)  \\
  & &- \tfrac{1}{\beta}\prox_{(\beta\Psi)^*_i}\big(x_k\ii - \beta\nabla_i f(x_k) \big)\| \\
  & \overset{\eqref{Eq_proxnonexpansive}}{\le} & \|H_k\ii t_k\ii\| + \tfrac{1}{\beta}\| (I- \beta H_k\ii)t_k\ii \| \\
  & \le & (\tfrac{1}{\beta} + 2\|H_k\ii\| )\|t_k\ii\| \\
  & \le & (\tfrac{1}{\beta} + 2\Lambda_i)\|t_k\ii\|.
  \end{eqnarray*}
  Rearranging gives \eqref{in:8}, and combining \eqref{Eq_F_ubont} and \eqref{in:8} gives \eqref{Eq_Fubong}.

\end{proof}
We now have all the tools to prove global convergence of RCD.
\begin{theorem}
\label{thm:1}
  Let Assumptions \ref{Assump_HisPD} and \ref{Assump_fisLipschitz} hold. Then the following hold for RCD:
  \begin{equation*}
     \lim_{k \to \infty} t\ii_k = 0 \quad \forall i \quad \mbox{and} \quad \lim_{k \to \infty} g(x_k;0) = 0.
  \end{equation*}
  with probability one.
\end{theorem}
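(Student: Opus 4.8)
The plan is to combine the per-iteration sufficient decrease guaranteed by Lemma~\ref{lem:Fdec} (inequality \eqref{Eq_F_ubont}) with a standard stochastic/telescoping argument, exploiting that $f$ is bounded below (Assumption~\ref{Assump_fisLipschitz}). First I would observe that RCD generates a monotonically decreasing sequence $\{F(x_k)\}$: by Lemma~\ref{lem:Fdec}, at iteration $k$ with sampled block $i$ we have $F(x_k) - F(x_{k+1}) > \theta(1-\theta)\frac{\lambda_i^2}{4L_i}\|t_k\ii\|^2 \ge 0$. Since $F = f + \Psi$ with $f$ bounded below and $\Psi$ convex (hence bounded below on the relevant sublevel set, or one simply notes $F(x_k)$ is nonincreasing and $f$ bounded below forces $\{F(x_k)\}$ to be bounded below once we check $\Psi(x_k)$ stays finite), the sequence $\{F(x_k)\}$ converges, so $F(x_k) - F(x_{k+1}) \to 0$.

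Next I would pass to expectations. Conditioning on $x_k$ and taking expectation over the random block $i$ (sampled with probability $p_i>0$), Lemma~\ref{lem:Fdec} gives
\begin{equation*}
\E\big[F(x_k) - F(x_{k+1}) \,\big|\, x_k\big] > \sum_{i=1}^n p_i\, \theta(1-\theta)\frac{\lambda_i^2}{4L_i}\, \E\big[\|t_k\ii\|^2 \,\big|\, x_k\big] \ge c\sum_{i=1}^n p_i\, \E\big[\|t_k\ii\|^2 \,\big|\, x_k\big],
\end{equation*}
where $c \eqdef \theta(1-\theta)\min_i \frac{\lambda_i^2}{4L_i} > 0$. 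Taking total expectation and summing over $k=0,1,2,\dots$ the left side telescopes and is bounded by $F(x_0) - \inf F < \infty$, so $\sum_{k} \sum_i p_i\,\E\|t_k\ii\|^2 < \infty$, which forces $\E\|t_k\ii\|^2 \to 0$ for every $i$ (since $p_i>0$). Using Lemma~\ref{lem:Fdec2}, \eqref{Eq_Fubong} similarly yields $\sum_k \sum_i p_i\,\E\|g_i(x_k;0)\|^2 < \infty$, hence $\E\|g_i(x_k;0)\|^2 \to 0$; and since the sampled-block criterion in Step~5 means $t\ii$ is only computed for blocks with $g_i \neq 0$, I would assemble $\|g(x_k;0)\|^2 = \sum_i \|g_i(x_k;0)\|^2$ to get $\E\|g(x_k;0)\|^2 \to 0$ as well.

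Finally, to upgrade the $L^2$ (or in-expectation) statements to the claimed \emph{almost sure} limits, I would invoke the Borel--Cantelli / monotone convergence route: because $\sum_k \E\|t_k\ii\|^2 < \infty$, the nonnegative random series $\sum_k \|t_k\ii\|^2$ has finite expectation, hence is finite almost surely, which implies $\|t_k\ii\| \to 0$ a.s.; the same argument applied to $\sum_k \|g(x_k;0)\|^2$ gives $g(x_k;0) \to 0$ a.s. The main obstacle I anticipate is bookkeeping around the sampling: one must be careful that at each iteration only \emph{one} block is updated, so the decrease inequality applies only to the sampled $i$, and one needs $p_i > 0$ uniformly (guaranteed by Step~4) to convert $\sum_i p_i \E\|t_k\ii\|^2 \to 0$ into $\E\|t_k\ii\|^2 \to 0$ for \emph{each} $i$; and one must handle the Step~5 branch where $g_i(x_k;0)=0$ forces $t_k\ii = 0$ (Remark~\ref{Remark_tineq0}(i)), so that inequality \eqref{Eq_F_ubont} holds trivially in that case and the summation argument goes through uniformly.
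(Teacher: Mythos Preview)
Your proposal is correct and follows essentially the same strategy as the paper: invoke the sufficient-decrease inequalities \eqref{Eq_F_ubont} and \eqref{Eq_Fubong} from Lemmas~\ref{lem:Fdec} and~\ref{lem:Fdec2}, combine them with the fact that $F$ is bounded below, and conclude $\|t_k\ii\|\to 0$ and then $\|g_i(x_k;0)\|\to 0$ for every block. The only difference is in the wrap-up for the almost-sure claim: the paper argues directly from the deterministic monotonicity of $\{F(x_k)\}$ (so $F(x_k)-F(x_{k+1})\to 0$ on every sample path, and then \eqref{Eq_F_ubont}, \eqref{in:8} finish), whereas you pass to expectations, telescope to obtain summability, and apply a Borel--Cantelli/monotone-convergence argument --- a slightly more careful route, but the content is the same.
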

\begin{proof}
    By Lemma \ref{lem:Fdec}, for $t\ii_k \neq 0$ we have that $F(x_k) > F(x_{k} + {\alpha}U_it\ii_k) $. Since $F$ is bounded from below and every block $(i)$ has positive probability to be selected, then
    for $k\to \infty$ the following holds with probability one:
    \begin{equation}
    \label{eq:10}
     \lim_{k\to \infty} F(x_k) - F(x_k + {\alpha}U_it\ii_k) = 0.
    \end{equation}
    Using \eqref{Eq_F_ubont} in combination with \eqref{eq:10} we get that for $k\to\infty$ with probability one $  \|t_k\ii\|\to 0$, which proves the first part.
    Using \eqref{in:8} and $\|t_k\ii\|\to 0$ for $k\to\infty$ with probability one we also get that $\|g_i(x_k;0)\|\to0$ with probability one for $k\to\infty$
    Since $g_i(x_k;0)$ is a block of $g(x_k;0)$, i.e., $ g_i(x_k;0)  \overset{\eqref{U_i}}{=} U_i^Tg(x_k;0)$ and since every block $g_i(x_k;0)$ tends to zero as $k\to\infty$ with probability one, then we have that
    $g(x_k;0)\to0$ with probability one.
\end{proof}

\section{Local convergence theory}
\label{S_LocalConvergence}

In this section we present local convergence theory for RCD. First we discuss some common assumptions that are needed.
Throughout the section we set $H_k\ii = \nabla_{i}^2 f(x_k)$ $\forall i$, where $\nabla_{i}^2 f(x)$ denotes the principal minor of the Hessian $\nabla^2 f(x)$ with row (equivalently column) indices in the subset $i$.
\begin{assumption}\label{Assump_stronglyconvex}
  Function $f$ is strongly convex with strong convexity parameter $\mu_f>0$.
\end{assumption}

By continuity of $f$ we have that $\nabla^2 f(x)$ is symmetric, and by strong convexity of $f$ we have that $\mu_f I \preceq \nabla^2 f(x)$.
%

The next theorem explains that, if the Hessian $H\equiv \nabla^2 f(x)$ is positive definite, then every principal submatrix of it is also positive definite.

\begin{theorem}[Theorem 4.3.15 in \cite{Horn85}]\label{Thm_eigs}
 Let $A \in \R^{N \times N}$ be a Hermitian matrix, let $r$ be an integer with $1 \le r \le N$, and let $A_r$ denote any $r \times r$ principal submatrix of $A$ (obtained by deleting $N-r$ rows and the corresponding columns from $A$). For each integer $k$ such that $1\le k\le r$ we have
$\lambda_k(A) \le \lambda_k(A_r) \le \lambda_{k+N-r}(A),$
where $\lambda_k(\cdot)$ denotes the $k$th eigenvalue of matrix $\cdot$, and the eigenvalues are ordered $\lambda_1 \le \dots, \lambda_N$.
\end{theorem}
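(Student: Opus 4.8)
The plan is to establish these interlacing inequalities (the Cauchy interlacing theorem) via the Courant--Fischer min--max characterization of the eigenvalues of a Hermitian matrix, together with two elementary monotonicity observations: restricting the feasible set of a minimization can only increase its value, and enlarging the feasible set of a maximization can only increase its value. First I would record the structural fact that $A_r = P^T A P$ (conjugate transpose in the complex Hermitian case), where $P \in \R^{N\times r}$ is the matrix whose columns are the $r$ standard basis vectors of $\R^N$ indexed by the retained rows/columns. Thus $P$ has orthonormal columns and $y \mapsto Py$ is an isometry of $\R^r$ onto the $r$-dimensional coordinate subspace $V \eqdef \mathrm{range}(P) \subseteq \R^N$, so for the Rayleigh quotient $R(x) \eqdef \langle x, A x\rangle/\langle x, x\rangle$ one has $\langle y, A_r y\rangle/\langle y, y\rangle = R(Py)$ for all $0 \ne y \in \R^r$. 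In other words, the Rayleigh quotient of $A_r$ ranging over $\R^r$ is exactly the Rayleigh quotient of $A$ restricted to the subspace $V$, and $k$-dimensional subspaces of $\R^r$ correspond bijectively (via $P$) to $k$-dimensional subspaces of $\R^N$ contained in $V$.

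For the lower bound $\lambda_k(A) \le \lambda_k(A_r)$ I would use the min--max form $\lambda_k(A) = \min_{\dim S = k} \max_{0 \ne x \in S} R(x)$, the minimum over all $k$-dimensional subspaces $S \subseteq \R^N$, and likewise $\lambda_k(A_r) = \min_{\dim T = k} \max_{0\ne y \in T} \langle y, A_r y\rangle/\langle y,y\rangle$ over $k$-dimensional $T \subseteq \R^r$. By the isometry above, the latter equals the same minimization but taken only over those $k$-dimensional subspaces of $\R^N$ that lie in $V$; since this is a restriction of the feasible set, $\lambda_k(A_r) \ge \lambda_k(A)$ (here $k \le r$ guarantees the feasible set is nonempty).

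For the upper bound $\lambda_k(A_r) \le \lambda_{k+N-r}(A)$ I would repeat the argument with the dual max--min characterization. For the $r\times r$ matrix $A_r$ one has $\lambda_k(A_r) = \max_{\dim T = r-k+1} \min_{0\ne y \in T} \langle y, A_r y\rangle/\langle y,y\rangle$, which by the isometry equals the maximum of $\min_{0\ne x \in S} R(x)$ over $(r-k+1)$-dimensional subspaces $S \subseteq V$. On the other hand, the max--min form for $A$ gives $\lambda_j(A) = \max_{\dim S = N-j+1} \min_{0\ne x\in S} R(x)$, and with $j = k+N-r$ the dimension becomes $N - (k+N-r) + 1 = r-k+1$, so $\lambda_{k+N-r}(A) = \max_{\dim S = r-k+1}\min_{0\ne x\in S} R(x)$ over \emph{all} $(r-k+1)$-dimensional subspaces of $\R^N$ (note $1 \le k+N-r \le N$ holds because $1 \le k \le r \le N$). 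Enlarging the feasible set of the maximization can only increase its value, hence $\lambda_k(A_r) \le \lambda_{k+N-r}(A)$.

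I expect the only point requiring care to be bookkeeping the dimension/index shift so that the two dual characterizations line up, i.e.\ matching $r-k+1$ with $N-(k+N-r)+1$; the monotonicity-under-restriction and monotonicity-under-enlargement steps are immediate once the isometry is in place. An alternative, min--max-free route is induction on the codimension $N-r$: it suffices to treat $r = N-1$ (deleting a single row and the matching column), where $\lambda_k(A) \le \lambda_k(A_{N-1}) \le \lambda_{k+1}(A)$ follows from a dimension count on eigenspaces intersecting the coordinate hyperplane, and iterating $N-r$ times telescopes to the stated bounds. Finally I would record the consequence used in the paper: applying the lower bound with $k=1$ to $A = \nabla^2 f(x)$, which satisfies $\lambda_1(\nabla^2 f(x)) \ge \mu_f > 0$ by Assumption~\ref{Assump_stronglyconvex}, yields $\lambda_{\min}(\nabla_i^2 f(x)) = \lambda_1(A_r) \ge \lambda_1(A) > 0$, so every principal submatrix $H\ii(x) = \nabla_i^2 f(x)$ is positive definite.
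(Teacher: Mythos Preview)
The paper does not supply its own proof of this theorem; it is quoted verbatim as Theorem~4.3.15 from \cite{Horn85} and used only through the Corollary that follows. Your proposal via the Courant--Fischer min--max characterization is correct and is, in fact, precisely the argument given in \cite{Horn85} for this result, so there is nothing to compare.
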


\begin{corollary}
  If $f$ is strongly convex with strong convexity parameter $\mu_f>0$, then
  \begin{equation}
\label{eq:11}
\mu_f I \preceq \nabla_i^2 f(x) \qquad \text{for all } i \subseteq \{1,\dots,N\}.
\end{equation}
\end{corollary}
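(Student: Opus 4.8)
The plan is to combine the definition of strong convexity with the eigenvalue interlacing result of Theorem~\ref{Thm_eigs}. First I would recall that, as already observed just before the corollary, strong convexity of $f$ with parameter $\mu_f$ together with $f \in C^2$ gives $\mu_f I \preceq \nabla^2 f(x)$ for every $x$; equivalently, if we order the eigenvalues of the symmetric matrix $\nabla^2 f(x)$ as $\lambda_1 \le \dots \le \lambda_N$, then $\lambda_1(\nabla^2 f(x)) \ge \mu_f$.

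Next I would observe that for any block $i$ the matrix $\nabla_i^2 f(x) = U_i^T \nabla^2 f(x) U_i$ is an $N_i \times N_i$ principal submatrix of $\nabla^2 f(x)$: since $U_i$ consists of $N_i$ distinct columns of the identity, forming $U_i^T \nabla^2 f(x) U_i$ amounts precisely to keeping the rows and columns indexed by the set $i$ and deleting the remaining $N - N_i$ rows and corresponding columns. Hence Theorem~\ref{Thm_eigs} applies with $A = \nabla^2 f(x)$ (Hermitian by $f \in C^2$), $r = N_i$, and $A_r = \nabla_i^2 f(x)$.

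Finally, taking $k = 1$ in Theorem~\ref{Thm_eigs} yields $\lambda_1(\nabla^2 f(x)) \le \lambda_1(\nabla_i^2 f(x))$, i.e., the smallest eigenvalue of the block cannot be smaller than that of the full Hessian. Chaining this with $\lambda_1(\nabla^2 f(x)) \ge \mu_f$ gives $\lambda_{\min}(\nabla_i^2 f(x)) \ge \mu_f$, which is exactly $\mu_f I \preceq \nabla_i^2 f(x)$; as $i \subseteq \{1,\dots,N\}$ was arbitrary, \eqref{eq:11} follows. I do not expect a genuine obstacle here: the only point requiring a little care is the bookkeeping that identifies $\nabla_i^2 f(x)$ as a principal submatrix of $\nabla^2 f(x)$, so that the hypotheses of Theorem~\ref{Thm_eigs} are legitimately met, together with noting that symmetry (Hermiticity) of the Hessian is supplied by the $C^2$ assumption.
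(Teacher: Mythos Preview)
Your proposal is correct and is exactly the argument the paper intends: the corollary is stated immediately after Theorem~\ref{Thm_eigs} with no separate proof, precisely because it follows by taking $k=1$ in that interlacing result applied to the principal submatrix $\nabla_i^2 f(x)$ of $\nabla^2 f(x)$, together with the observation $\lambda_1(\nabla^2 f(x))\ge \mu_f$ from strong convexity.
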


\begin{assumption}\label{Assump_HkiSC}
We assume that the blocks of the Hessian of $f$ are Lipschitz continuous. This means that for all $x \in \R^N$, $i \subseteq \{1,2,\dots,n\}$ and $t\ii \in \R^{N_i}$ we have
\begin{equation}\label{Def_LipschitzHessian}
 \| \nabla_{i}^2 f(x + U_it\ii) - \nabla_{i}^2 f(x) \| \leq M_i \|t\ii\|.
\end{equation}
\end{assumption}

The following theorem is used to show that the backtracking line search accepts units step sizes close to optimality for any block $i$.
Similarly to \cite{sqa}, in order to prove the previous statement we have to impose sufficient decrease of the quadratic model \eqref{eq_subproblem}
at every iteration. This means that the inexactness conditions in \eqref{Def_stoppingconditions} are replaced by
\begin{eqnarray}
\label{Def_stoppingconditions_suff}
 \xi(\ell(x_k;0)-\ell(x_k;U_it\ii_k)) &\leq& Q(x_k;0) - Q(x_k;U_it\ii_k),\\
 \notag \text{and} \;\; \|g_i(x_k;t\ii_k)\| &\leq& \eta_k^i \|g_i(x_k;0)\|,
\end{eqnarray}
where $\xi\in(\theta,1/2)$ and $\eta_k^i \in [0,1)$. 

Notice that, substituting the equality in \eqref{Eq_yo} into \eqref{Def_stoppingconditions_suff}, gives
\begin{equation}
\label{eq:103}
\frac{1}{2}\|t\ii_k\|_{H\ii_k}^2 \le (1-\xi)\big(\ell(x_k;0)-\ell(x_k;U_it\ii_k)\big).
\end{equation}

\begin{theorem}
\label{thm:unitstepsize}
 Let Assumptions \ref{Assump_stronglyconvex} and \ref{Assump_HkiSC} hold. Let $x_k$ and $i$ be generated by RCD.
 Moreover, let subproblem \eqref{eq_subproblem} of RCD be solved inexactly until the inexactness conditions \eqref{Def_stoppingconditions_suff}
 are satisfied.
 If $\|t\ii_k\| \le 3{\lambda_i}(\xi-{\theta})/{M_i}$, where $\theta \in (0,1/2)$, $\xi\in(\theta,1/2)$
 and $\lambda_i \equiv \lambda_{\min}(H_k\ii) \geq \mu_f>0$ $\forall i$ (by Assumption \ref{Assump_stronglyconvex}), then the backtracking line search step in RCD accepts step sizes
 $\alpha = 1$.
\end{theorem}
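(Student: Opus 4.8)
The plan is to show that when $\|t_k\ii\|$ is small enough, the line search inequality \eqref{Def_linesearch} holds at $\alpha=1$, i.e. $F(x_k) - F(x_k + U_it_k\ii) \ge \theta(\ell(x_k;0) - \ell(x_k;U_it_k\ii))$. First I would use the second-order Taylor expansion of $f$ along the block direction, together with the block Hessian Lipschitz condition \eqref{Def_LipschitzHessian}, to write
\begin{equation*}
f(x_k + U_it_k\ii) \le f(x_k) + \langle \nabla_i f(x_k), t_k\ii \rangle + \tfrac12 \|t_k\ii\|_{\nabla_i^2 f(x_k)}^2 + \tfrac{M_i}{6}\|t_k\ii\|^3,
\end{equation*}
and since $H_k\ii = \nabla_i^2 f(x_k)$, the middle two terms combine with $\Psi_i$ to match the quadratic model. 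Adding $\Psi(x_k + U_it_k\ii)$ to both sides and rearranging (using \eqref{Eq_PsivsPsii} and the definition of $\ell$ and $Q$) gives
\begin{equation*}
F(x_k) - F(x_k + U_it_k\ii) \ge \big(\ell(x_k;0) - \ell(x_k;U_it_k\ii)\big) - \tfrac12\|t_k\ii\|_{H_k\ii}^2 - \tfrac{M_i}{6}\|t_k\ii\|^3.
\end{equation*}

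Next I would invoke the sufficient-decrease stopping condition in the form \eqref{eq:103}, namely $\tfrac12\|t_k\ii\|_{H_k\ii}^2 \le (1-\xi)(\ell(x_k;0)-\ell(x_k;U_it_k\ii))$, to replace the quadratic term, yielding
\begin{equation*}
F(x_k) - F(x_k + U_it_k\ii) \ge \xi\big(\ell(x_k;0) - \ell(x_k;U_it_k\ii)\big) - \tfrac{M_i}{6}\|t_k\ii\|^3.
\end{equation*}
It then remains to absorb the cubic error term into the gap $\xi - \theta$. For this I need a lower bound on $\ell(x_k;0) - \ell(x_k;U_it_k\ii)$ in terms of $\|t_k\ii\|^2$; this comes again from \eqref{eq:103} combined with $\|t_k\ii\|_{H_k\ii}^2 \ge \lambda_i\|t_k\ii\|^2$ (Assumption on eigenvalues / \eqref{eq:11}), giving $\ell(x_k;0) - \ell(x_k;U_it_k\ii) \ge \tfrac{\lambda_i}{2(1-\xi)}\|t_k\ii\|^2 \ge \tfrac{\lambda_i}{2}\|t_k\ii\|^2$ since $\xi < 1/2$. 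Therefore
\begin{equation*}
(\xi-\theta)\big(\ell(x_k;0)-\ell(x_k;U_it_k\ii)\big) \ge \tfrac{(\xi-\theta)\lambda_i}{2}\|t_k\ii\|^2,
\end{equation*}
and this dominates $\tfrac{M_i}{6}\|t_k\ii\|^3$ precisely when $\|t_k\ii\| \le 3\lambda_i(\xi-\theta)/M_i$, which is the hypothesis. Chaining these inequalities gives $F(x_k) - F(x_k+U_it_k\ii) \ge \theta(\ell(x_k;0)-\ell(x_k;U_it_k\ii))$, so $\alpha=1$ is accepted.

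The main obstacle — really the only delicate point — is getting the constants to line up: the cubic remainder has coefficient $M_i/6$, the usable slack is $(\xi-\theta)$ times a lower bound on the loss gap, and the loss gap lower bound carries a factor $\lambda_i/2$ (after using $1-\xi > 1/2$). One must be careful that the lower bound \eqref{eq:103} is applied in the correct direction and that the eigenvalue bound $\lambda_i \ge \mu_f$ (Corollary to Theorem \ref{Thm_eigs}) is what guarantees $H_k\ii \succ 0$ so that $\|\cdot\|_{H_k\ii}$ is a genuine norm and the stopping conditions are well posed. Everything else is routine Taylor estimation and rearrangement.
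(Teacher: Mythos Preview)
Your proposal is correct and follows essentially the same route as the paper's proof: cubic Taylor bound via \eqref{Def_LipschitzHessian}, apply \eqref{eq:103} once to replace $\tfrac12\|t_k\ii\|_{H_k\ii}^2$ by $(1-\xi)$ times the loss gap, then use \eqref{eq:103} again together with $\|t_k\ii\|_{H_k\ii}^2 \ge \lambda_i\|t_k\ii\|^2$ to control the cubic remainder and recover the threshold $\|t_k\ii\|\le 3\lambda_i(\xi-\theta)/M_i$. The only cosmetic difference is that the paper writes the last step as bounding $\tfrac{M_i}{6}\|t_k\ii\|^3 \le \tfrac{M_i}{3\lambda_i}\|t_k\ii\|\,(\ell(x_k;0)-\ell(x_k;U_it_k\ii))$ and then factors, whereas you compare $(\xi-\theta)(\ell(x_k;0)-\ell(x_k;U_it_k\ii))$ directly to the cubic term; the arithmetic and constants are identical. (One nitpick: the inequality $\tfrac{\lambda_i}{2(1-\xi)}\ge \tfrac{\lambda_i}{2}$ holds for any $\xi\in(0,1)$, not specifically because $\xi<1/2$.)
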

\begin{proof}The proof closely follows that of \cite[Lemma $3.3$]{Lee13}.
  Using Lipschitz continuity of $H_k\ii $ we have that
  $$
  f(x_k + U_it\ii_k) \le f(x_k) + \langle \nabla_i f(x_k), t\ii_k \rangle + \tfrac12 \|t_k\ii\|_{H_k\ii}^2 + \tfrac{M_i}{6}\|t\ii_k\|^3.
  $$
  Adding $\Psi(x_k+U_i t\ii_k)$ to both sides gives
  \begin{eqnarray}
  \notag
  F(x_k + U_it\ii_k) &\le& f(x_k) + \langle \nabla_i f(x_k), t\ii_k \rangle + \tfrac12 \|t_k\ii\|_{H_k\ii}^2 + \tfrac{M_i}{6}\|t\ii_k\|^3 + \Psi(x_k + U_i t\ii_k)\\
  \notag
    &=& F(x_k) + \langle \nabla_i f(x_k), t\ii_k \rangle + \tfrac12 \|t_k\ii\|_{H_k\ii}^2 + \tfrac{M_i}{6}\|t\ii_k\|^3 \\
    \notag
                            &  & + \Psi(x_k + U_i t\ii_k) -\Psi(x_k)\\
                            \notag
    &=& F(x_k) - (\ell(x_k;0)-\ell(x_k;U_it\ii_k)) + \tfrac12 \|t_k\ii\|_{H_k\ii}^2 + \tfrac{M_i}{6}\|t\ii_k\|^3\\
    \label{eq:104}
    &\overset{\eqref{eq:103}}{\le}& F(x_k) - \xi (\ell(x_k;0)-\ell(x_k;U_it\ii_k)) + \tfrac{M_i}{6}\|t\ii_k\|^3.    
  \end{eqnarray}
  
Clearly, $\lambda_i \equiv \lambda_{\min}(H_k\ii) \leq \|t_k\ii\|_{H_k\ii}^2/\|t_k\ii\|^2$. Using this with \eqref{eq:103} in \eqref{eq:104} we get
  \begin{eqnarray*}
F(x_k + U_i t\ii_k) &\le& F(x_k) - \xi (\ell(x_k;0)-\ell(x_k;U_it\ii_k)) \\
                              & &+ \tfrac{M_i}{3 \lambda_i}\|t\ii_k\|(\ell(x_k;0)-\ell(x_k;U_it\ii_k)) \\
                              &=&F(x_k) - (\xi - \tfrac{M_i}{3 \lambda_i}\|t\ii_k\|)(\ell(x_k;0)-\ell(x_k;U_it\ii_k)).
  \end{eqnarray*}
  If $\|t\ii_k\| \le 3{\lambda_i}(\xi-{\theta})/{M_i}$, $\theta \in (0,\frac12)$ and $\xi\in(\theta,1/2)$ then
  $
  F(x_k) - F(x_k + U_i t\ii_k)  \ge \theta (\ell(x_k;0)-\ell(x_k;U_it\ii_k)),
  $
  which implies that RCD accepts a step $\alpha = 1$.
\end{proof}
\begin{corollary}
\label{corollary:unit_step}
By Theorem \ref{thm:1}, $\|t\ii_k\| \to 0$ $\forall i$ as $k\to \infty$.
Thus, there will be a region close to the optimal solution $x_*$ such that $\|t\ii_{k}\| \le  3 {\lambda_i}(\xi-\theta)/{M_i}$ for all $i$ and for all $k$. Hence, by Theorem \ref{thm:unitstepsize}, in this region, the backtracking line search step in RCD accepts unit step sizes for any $i$.
\end{corollary}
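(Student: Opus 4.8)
The plan is to read off the corollary from the global convergence Theorem~\ref{thm:1} and the unit-step criterion Theorem~\ref{thm:unitstepsize}; the only real content is to check that the finitely many per-block thresholds can be met simultaneously on one neighbourhood.

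First I would note that the sufficient-decrease stopping rule~\eqref{Def_stoppingconditions_suff} adopted in this section implies the plain rule~\eqref{Def_stoppingconditions}: the $g_i$-conditions are identical, and by~\eqref{eq:103} the gap $\ell(x_k;0)-\ell(x_k;U_it\ii_k)$ is nonnegative and is strictly positive whenever $t\ii_k\neq0$, so the first inequality of~\eqref{Def_stoppingconditions_suff} forces $Q(x_k;U_it\ii_k)<Q(x_k;0)$ in that case, while $t\ii_k=0$ only at a block-stationary point (Remark~\ref{Remark_tineq0}). Hence Theorem~\ref{thm:1} applies unchanged and yields $\|t\ii_k\|\to0$ for every block $i$ as $k\to\infty$, with probability one.

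Second, I would make the threshold uniform. Under Assumptions~\ref{Assump_stronglyconvex} and~\ref{Assump_HkiSC} each number $3\lambda_i(\xi-\theta)/M_i$ is strictly positive, since $\lambda_i=\lambda_{\min}(\nabla_i^2 f(x_k))\ge\mu_f>0$ by~\eqref{eq:11}, $\xi-\theta>0$ by the choice $\xi\in(\theta,1/2)$, and $0<M_i<\infty$ by~\eqref{Def_LipschitzHessian}. Because RCD samples its blocks from a finite family (the $n$ fixed blocks, or the $^NC_\tau$ index subsets when $\Psi$ is coordinate-separable), $\varepsilon:=\min_i 3\lambda_i(\xi-\theta)/M_i>0$. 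Since $\|t\ii_k\|\to0$ for each of these finitely many $i$, there is an index $\bar k$ (equivalently a neighbourhood of the unique minimizer $x_*$, unique because strong convexity of $f$ makes $F$ strongly convex) with $\|t\ii_k\|\le\varepsilon\le 3\lambda_i(\xi-\theta)/M_i$ for all $i$ and all $k\ge\bar k$. For every such $k$, Theorem~\ref{thm:unitstepsize}, whose remaining hypotheses are the standing assumptions of this section, shows that the backtracking line search in Step~6 accepts $\alpha=1$. I do not expect a genuine obstacle here; the two points needing attention are making the threshold uniform over the admissible blocks so that a single region serves all $i$ at once, and noticing that Theorem~\ref{thm:unitstepsize} (hence the corollary) is phrased for the stronger stopping rule~\eqref{Def_stoppingconditions_suff}, which is exactly why the reduction in the first step is required.
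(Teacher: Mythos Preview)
Your proposal is correct and follows the same reasoning as the paper. In fact, the paper gives no separate proof for this corollary: the argument is embedded in the statement itself (invoke Theorem~\ref{thm:1} to get $\|t\ii_k\|\to0$, then invoke Theorem~\ref{thm:unitstepsize}). You supply strictly more detail than the paper does, and the two points you flag---that Theorem~\ref{thm:1} was proved under the weaker stopping rule~\eqref{Def_stoppingconditions} and so one must check that~\eqref{Def_stoppingconditions_suff} implies it, and that a single region works for all blocks because there are only finitely many per-block thresholds---are exactly the gaps the paper leaves implicit.

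One small cleanup: since $\lambda_i=\lambda_{\min}(\nabla_i^2 f(x_k))$ depends on $k$, your definition $\varepsilon:=\min_i 3\lambda_i(\xi-\theta)/M_i$ is not literally a constant. Replace $\lambda_i$ by its $k$-independent lower bound $\mu_f$ from~\eqref{eq:11}, e.g.\ take $\varepsilon:=3\mu_f(\xi-\theta)/\max_i M_i$, and the rest of your argument goes through verbatim.
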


The following assumption is mild since it is guaranteed to be satisfied by Corollary \ref{corollary:unit_step}.

\begin{assumption}\label{Assump_unitstep}
  Iteration $x_k$ is close to the optimal solution $x_*$ of \eqref{Def_F}
such that unit step sizes are accepted by the backtracking line search algorithm of RCD.
\end{assumption}

The next lemma is a technical result that will be used in Theorem \ref{thm:quadratic}.
\begin{lemma}\label{strong_monotonicity}
Let Assumptions \ref{Assump_fisLipschitz} and \ref{Assump_stronglyconvex} hold. Let $L_{\max} = \max_i\{L_1,\dots,L_n\}$. Let $\beta < 1/L_{\max}$ in \eqref{defeq:2}. Then $g_i(x;t\ii)$ inherits strong monotonicity of $\nabla_i f(x)$ $\forall i$:
$$
(u - v)^T (g_i(x;u) - g_i(x;v)) \ge \frac{\mu_f}{2}\|u-v\|^2 \quad \forall u,v \in \R^{N_i}.
$$
\end{lemma}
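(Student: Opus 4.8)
The plan is to recast $g_i(x;t\ii)$ as the forward--backward residual of a strongly convex composite problem, after which the estimate follows from the fact that the forward--backward operator is a contraction. The first step is to eliminate $\prox_{(\beta\Psi)^*_i}$ in favour of $\prox_{\beta\Psi_i}$: applying the Moreau identity of Lemma~\ref{moreau} to the block function $\beta\Psi_i$ (so that $(\beta\Psi)^*_i = (\beta\Psi_i)^*$ by block separability) gives $\prox_{(\beta\Psi)^*_i}(w) = w - \prox_{\beta\Psi_i}(w)$. Substituting $w = x\ii + t\ii - \beta\big(\nabla_i f(x) + H\ii(x)t\ii\big)$ into \eqref{defeq:2} and cancelling the term $\nabla_i f(x) + H\ii(x)t\ii$, a short calculation yields
\begin{equation*}
  g_i(x;t\ii) = \frac{1}{\beta}\Big[(x\ii + t\ii) - \prox_{\beta\Psi_i}\big((x\ii + t\ii) - \beta(\nabla_i f(x) + H\ii(x)t\ii)\big)\Big].
\end{equation*}

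Next, fix $x$, set $z \eqdef x\ii + t\ii$, and define $T(z) \eqdef \prox_{\beta\Psi_i}\big(z - \beta(\nabla_i f(x) + H\ii(x)(z - x\ii))\big)$, so that $g_i(x;t\ii) = \tfrac1\beta\big(z - T(z)\big)$. I would then check that $T$ is Lipschitz with constant $1-\beta\mu_f$. Indeed, $z \mapsto z - \beta\big(\nabla_i f(x) + H\ii(x)(z - x\ii)\big)$ is affine with Jacobian $I - \beta H\ii(x)$; since $H\ii(x) = \nabla_i^2 f(x)$ has eigenvalues in $[\mu_f, L_i] \subseteq [\mu_f, L_{\max}]$ (the lower bound by \eqref{eq:11}, the upper bound because $\nabla_i f$ is $L_i$-Lipschitz by Assumption~\ref{Assump_fisLipschitz}) and $\beta < 1/L_{\max}$, the matrix $I - \beta H\ii(x)$ is symmetric positive definite with eigenvalues in $(0, 1-\beta\mu_f]$, hence with operator norm $1-\beta\mu_f$. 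Composing with the nonexpansive map $\prox_{\beta\Psi_i}$ (cf.\ \eqref{Eq_proxnonexpansive}) then gives $\|T(z_1) - T(z_2)\| \le (1-\beta\mu_f)\|z_1 - z_2\|$.

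Finally, for $u, v \in \R^{N_i}$ I would put $z_1 = x\ii + u$ and $z_2 = x\ii + v$, so that $z_1 - z_2 = u - v$, and estimate
\begin{align*}
  (u-v)^T\big(g_i(x;u) - g_i(x;v)\big)
  &= \frac{1}{\beta}\Big(\|z_1 - z_2\|^2 - (z_1 - z_2)^T\big(T(z_1) - T(z_2)\big)\Big) \\
  &\ge \frac{1}{\beta}\Big(\|z_1 - z_2\|^2 - (1-\beta\mu_f)\|z_1 - z_2\|^2\Big) = \mu_f\|u - v\|^2,
\end{align*}
using Cauchy--Schwarz together with the Lipschitz bound on $T$; since $\mu_f \ge \mu_f/2$ this proves the claim (in fact with the sharper constant $\mu_f$). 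The only real work is the algebraic bookkeeping in the first step and the spectral bound $\|I - \beta H\ii(x)\| = 1-\beta\mu_f$, both routine; the role of the hypothesis $\beta < 1/L_{\max}$ is precisely to make $I - \beta\nabla_i^2 f(x)$ positive definite uniformly in $i$ and $x$, which is what powers the contraction estimate.
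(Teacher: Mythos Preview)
Your argument is correct. The paper itself omits the proof entirely, pointing to \cite[Lemma~3.9]{Lee13} and noting that one simply restricts to the $i$th block, so there is no in-paper proof to compare against line by line. Your route---rewriting $g_i$ via Moreau's identity as the forward--backward residual $\tfrac1\beta\big(z - T(z)\big)$ and then bounding $\|T(z_1)-T(z_2)\|$ by $(1-\beta\mu_f)\|z_1-z_2\|$ using $\|I-\beta\nabla_i^2 f(x)\|=1-\beta\mu_f$ together with nonexpansiveness of $\prox_{\beta\Psi_i}$---is the standard one and matches in spirit what the cited reference does. Two remarks worth recording: (i) you correctly use the section-wide convention $H\ii(x)=\nabla_i^2 f(x)$, which is what makes the spectral bound $\mu_f\le \lambda(H\ii)\le L_i$ available; (ii) your computation in fact yields the constant $\mu_f$ rather than $\mu_f/2$, so the stated lemma is slightly weaker than what your argument delivers.
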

\begin{proof}
The proof is the same as \cite[Lemma $3.9$]{Lee13}, but restricted to the $i$th block, so is omitted.
\end{proof}

In the following theorem we demonstrate that RCD has on expectation \textit{block} quadratic or superlinear local rate of convergence.
\begin{theorem}
\label{thm:quadratic}
 Let Assumptions \ref{Assump_fisLipschitz}, \ref{Assump_stronglyconvex}, \ref{Assump_HkiSC} and \ref{Assump_unitstep} hold. 
 Let $x_{k+1} = x_k + t_k\ii$, $\eta_k^i = \min\{1/2, \|g_i(x_k;0)\|\},$ and $\beta < 1/L_{\max}$ in \eqref{defeq:2}. Then, $\|g_{i}(x_k;0)\|$ has a quadratic rate of convergence in expectation:
  $$
   \lim_{k\to \infty} \mathbf{E}\left[\frac{\|g_i(x_{k+1};0)\|}{\|g_i(x_{k};0)\|^2}\;|\;x_k\right] = \mbox{c},
  $$
  where $c$ is a positive constant.
  If $\eta_k^i \to 0$ for $k\to \infty$, then $\|g_{i}(x_k;0)\|$ has a superlinear rate of convergence in expectation:
    $$
   \lim_{k\to \infty} \mathbf{E}\left[\frac{\|g_i(x_{k+1};0)\|}{\|g_i(x_{k};0)\|}\;|\;x_k\right] = 0.
  $$
\end{theorem}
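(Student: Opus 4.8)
The plan is to reduce everything to a single recursion of the form $\|g_i(x_{k+1};0)\| \le \eta_k^i\|g_i(x_k;0)\| + C_i\|g_i(x_k;0)\|^2$ and then specialize $\eta_k^i$. The point of the choice $H_k\ii = \nabla_i^2 f(x_k)$ together with unit steps (Assumption~\ref{Assump_unitstep}), so that $x_{k+1}\ii = x_k\ii + t_k\ii$, is that $g_i(x_k;t_k\ii)$ --- the residual that \eqref{Def_stoppingconditions_suff} drives to zero at rate $\eta_k^i$ --- already agrees with $g_i(x_{k+1};0)$ up to a second-order Taylor remainder. Concretely, set $e_k \eqdef \nabla_i f(x_{k+1}) - \nabla_i f(x_k) - \nabla_i^2 f(x_k)t_k\ii$; the integral form of the remainder together with \eqref{Def_LipschitzHessian} gives $\|e_k\| \le \tfrac{M_i}{2}\|t_k\ii\|^2$. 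Since $g_i(x_{k+1};0) = \nabla_i f(x_{k+1}) + \tfrac1\beta\prox_{(\beta\Psi)^*_i}(x_{k+1}\ii - \beta\nabla_i f(x_{k+1}))$ (the $H$-term in \eqref{defeq:2} drops out at $t\ii=0$), substituting $\nabla_i f(x_{k+1}) = \nabla_i f(x_k) + \nabla_i^2 f(x_k)t_k\ii + e_k$ and comparing with $g_i(x_k;t_k\ii)$ shows that the affine parts differ exactly by $e_k$ and the two prox arguments differ exactly by $-\beta e_k$; nonexpansiveness \eqref{Eq_proxnonexpansive} then yields $\|g_i(x_{k+1};0) - g_i(x_k;t_k\ii)\| \le 2\|e_k\| \le M_i\|t_k\ii\|^2$. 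With the triangle inequality and $\|g_i(x_k;t_k\ii)\| \le \eta_k^i\|g_i(x_k;0)\|$ this gives $\|g_i(x_{k+1};0)\| \le \eta_k^i\|g_i(x_k;0)\| + M_i\|t_k\ii\|^2$.

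Next I would bound $\|t_k\ii\|$ linearly in $\|g_i(x_k;0)\|$; this is where strong convexity enters, via Lemma~\ref{strong_monotonicity} (applicable since $\beta < 1/L_{\max}$). Strong monotonicity of $g_i(x_k;\cdot)$ at the pair $(t_k\ii,0)$ together with Cauchy--Schwarz gives
$$\tfrac{\mu_f}{2}\|t_k\ii\|^2 \le (t_k\ii)^T\big(g_i(x_k;t_k\ii)-g_i(x_k;0)\big) \le \|t_k\ii\|\big(\|g_i(x_k;t_k\ii)\|+\|g_i(x_k;0)\|\big) \le (1+\eta_k^i)\|t_k\ii\|\,\|g_i(x_k;0)\|,$$
hence $\|t_k\ii\| \le \tfrac{2(1+\eta_k^i)}{\mu_f}\|g_i(x_k;0)\| \le \tfrac{3}{\mu_f}\|g_i(x_k;0)\|$ since $\eta_k^i \le 1/2$. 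Substituting back produces the master recursion
$$\|g_i(x_{k+1};0)\| \;\le\; \eta_k^i\|g_i(x_k;0)\| + \tfrac{9M_i}{\mu_f^2}\|g_i(x_k;0)\|^2 .$$

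Finally I would read off the two rates. Theorem~\ref{thm:1} applies here (Assumption~\ref{Assump_HisPD} holds with $\lambda_i=\mu_f$ by \eqref{eq:11} and $\Lambda_i = L_i$, since \eqref{S2_Lipschitz} forces $\|\nabla_i^2 f(x)\| \le L_i$), so $g(x_k;0)\to 0$ with probability one and therefore $g_i(x_k;0)=U_i^T g(x_k;0)\to 0$. Hence eventually $\|g_i(x_k;0)\|<1/2$, so $\eta_k^i=\|g_i(x_k;0)\|$ and the master recursion becomes $\|g_i(x_{k+1};0)\|\le(1+9M_i/\mu_f^2)\|g_i(x_k;0)\|^2$; taking the conditional expectation over the sampled block then gives $\mathbf{E}\big[\|g_i(x_{k+1};0)\|/\|g_i(x_k;0)\|^2\mid x_k\big]\le \sum_i p_i(1+9M_i/\mu_f^2)=:c>0$, the claimed in-expectation block-quadratic rate. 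For the superlinear statement I would instead divide the master recursion by $\|g_i(x_k;0)\|$ to get $\|g_i(x_{k+1};0)\|/\|g_i(x_k;0)\|\le \eta_k^i + \tfrac{9M_i}{\mu_f^2}\|g_i(x_k;0)\|$, so that under the extra hypothesis $\eta_k^i\to 0$ the conditional expectation is at most $\max_i\eta_k^i + \tfrac{9\max_i M_i}{\mu_f^2}\|g(x_k;0)\|\to 0$.

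The main obstacle I anticipate is the clean term-by-term comparison of $g_i(x_{k+1};0)$ with $g_i(x_k;t_k\ii)$: one must use that only block $i$ is updated (so $x_{k+1}\ii=x_k\ii+t_k\ii$ and the \emph{block} prox is the right object), and verify that the $\tfrac{M_i}{2}\|t_k\ii\|^2$ remainder $e_k$ is the \emph{only} discrepancy between the two expressions, entering both the affine part and, rescaled by $-\beta$, the prox argument, so that nonexpansiveness absorbs it without degrading the quadratic order. Everything else is bookkeeping with strong monotonicity and the global convergence result. (The degenerate case $g_i(x_k;0)=0$ is not a genuine issue: then $t_k\ii=0$ by Remark~\ref{Remark_tineq0}(i) and Step~5 of RCD resamples, so the ratio is never literally $0/0$ along the generated iterates.)
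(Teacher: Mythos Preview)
Your proposal is correct and follows essentially the same approach as the paper: Taylor remainder plus nonexpansiveness of the prox to get $\|g_i(x_{k+1};0)\|\le \eta_k^i\|g_i(x_k;0)\|+M_i\|t_k\ii\|^2$, then strong monotonicity (Lemma~\ref{strong_monotonicity}) to bound $\|t_k\ii\|$ by $\tfrac{2(1+\eta_k^i)}{\mu_f}\|g_i(x_k;0)\|$, and finally the specialization of $\eta_k^i$. Your direct term-by-term comparison via the single remainder $e_k$ is a slightly cleaner packaging than the paper's route (which parametrizes by $\delta$, invokes $\nabla_i^3 f$ explicitly, and applies the Fundamental Theorem of Calculus twice), but the substance and constants are identical.
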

\begin{proof}
  For a given $x_k$ we define
  \begin{equation}
    x(\delta) \eqdef x_k + \delta U_i t\ii_k, \quad \text{and} \quad x(\sigma) \eqdef x_k + \sigma U_i t\ii_k.
  \end{equation}
  Using the Fundamental Theorem of Calculus (F.T.o.C.), we have
  \begin{eqnarray}\label{Eq_FToC}
  \notag
    g_i(x(\delta);0) &=& \nabla_i f(x_k) + \delta \nabla_i^2 f(x_k) t\ii_k + \int_0^\delta \int_0^u \nabla_i^3 f(x(\sigma))[t\ii_k,t\ii_k] d \sigma d u \\
                            && + \frac{1}{\beta}\prox_{(\beta\Psi)^*_i}(x\ii(\delta) - \beta\nabla_i f(x(\delta)) ).
  \end{eqnarray}
   Also, from the definition of a derivative we have
\begin{eqnarray}
    \label{eqn_intupperbound}
  \notag
    \int_0^\delta \int_0^u \|\nabla_i^3 f(&x(\sigma)&)[t\ii_k,t\ii_k] \| d \sigma d u\\
    \notag
     &=&  \int_0^\delta \int_0^u \lim_{\sigma \to 0}\|\frac{1}{\sigma}(t\ii_k)^T(\nabla_i^2 f(x(\sigma))-\nabla_i^2 f(x_k)) \| d \sigma d u\\
      &\le& \notag \|t\ii_k\|\int_0^\delta \int_0^u \lim_{\sigma \to 0}\|\frac{1}{\sigma}(\nabla_i^2 f(x(\sigma))-\nabla_i^2 f(x_k)) \| d \sigma d u \\ &\overset{\eqref{Def_LipschitzHessian}}{\leq}&
    M_i \|t\ii_k\|^2 \int_0^\delta \int_0^u  1\; d \sigma d u = \frac{\delta^2}{2} M_i \|t\ii_k\|^2.
  \end{eqnarray}
  
  Now, adding and subtracting $\tfrac1\beta \prox_{(\beta\Psi)^*_i}(x_k\ii + t\ii_k - \beta(\nabla_i f(x_k) + H_k\ii t\ii_k) )$ from \eqref{Eq_FToC}, followed by taking norms, applying the triangle inequality, and using \eqref{eqn_intupperbound}, gives
%
%
  \begin{eqnarray}\label{Eq_yo2}
  \notag
    \|g_i(x(\delta);0)\|&\le & \|\nabla_i f(x_k) + \delta \nabla_i^2 f(x_k) t\ii_k \\
    \notag
      && +\frac{1}{\beta} \prox_{(\beta\Psi)^*_i}(x_k\ii + t\ii_k - \beta(\nabla_i f(x_k) + H_k\ii t\ii_k) )\|\\
      \notag
     && + \frac{1}{\beta}\|\prox_{(\beta\Psi)^*_i}(x\ii(\delta) - \beta\nabla_i f(x(\delta)) )  \\
     && -\prox_{(\beta\Psi)^*_i}(x_k\ii + t\ii_k - \beta(\nabla_i f(x_k) + H_k\ii t\ii_k) )\| +\frac{\delta^2}{2} M_i \|t\ii_k\|^2.
  \end{eqnarray}
  By Assumption \ref{Assump_unitstep}, RCD accepts unit step sizes. Hence, setting $\delta = 1$ in \eqref{Eq_yo2} gives
  \begin{eqnarray*}
    \|g_i(x_{k+1};0)\| &\le& \|g_i(x_k;t\ii_k)\| + \frac{1}{2}M_i \|t\ii_k\|^2 \\
    && +\frac{1}{\beta}\|\prox_{(\beta\Psi)^*_i}(x_{k+1}\ii - \beta\nabla_i f(x_{k+1}))  \\
    && -\prox_{(\beta\Psi)^*_i}(x_{k+1}\ii - \beta(\nabla_i f(x_k) + H_k\ii t\ii_k) )\|\\
    &\le& \|g_i(x_k;t\ii_k)\| + \|\nabla_i f(x_k) + H_k\ii t\ii_k - \nabla_i f(x_{k+1})\| + \frac12 M_i \|t\ii_k\|^2.
  \end{eqnarray*}
  Using the same trick as before with the F.T.o.C. we have the bound\\
  $\|\nabla_i f(x_k) + H_k\ii t\ii_k - \nabla_i f(x_{k+1})\| \le \frac12 M_i \|t\ii_k\|^2,$
  so that
  \begin{eqnarray}
  \notag
    \|g_i(x_{k+1};0)\| &\le& \|g_i(x_k;t\ii_k)\| + M_i \|t\ii_k\|^2\\
     \label{eq:3}
      &\overset{\eqref{Def_stoppingconditions_suff}}{\le}&  \eta_k^i \|g_i(x_k;0)\| + M_i \|t\ii_k\|^2.
      %
  \end{eqnarray}
  Setting $u=t\ii_k$ and $v=0$ in Lemma \ref{strong_monotonicity} gives
  $  (g_i(x_k;t\ii_k) - g_i(x_k;0))^Tt\ii_k \ge \frac{\mu_f}{2}\|t\ii_k\|^2$. Then, using Cauchy-Schwarz we have
  $$
  \|g_i(x_k;t\ii_k) - g_i(x_k;0)\| \ge \frac{\mu_f}{2}\|t\ii_k\|.
  $$
  By the triangular inequality and stopping conditions \eqref{Def_stoppingconditions} we have
  \begin{equation}
  \label{eq:102}
  (1+\eta^i_k) \|g_i(x_k;0)\| \ge \frac{\mu_f}{2}\|t\ii_k\|.
  \end{equation}
  Replacing \eqref{eq:102} in \eqref{eq:3} we have
  \begin{equation}
  \label{eq:105}
  \|g_i(x_{k+1};0)\| \le \eta_k^i \|g_i(x_k;0)\| + \frac{4 M_i (1 + \eta^i_k)^2}{\mu_f^2} \|g_i(x_k;0)\|^2.
  \end{equation}
 Moreover, by setting $\eta_k^i = \min\{1/2, \|g_i(x_k;0)\|\}$ we obtain
 \begin{equation}
 \label{eq:4}
    \|g_i(x_{k+1};0)\| \le \left(1 +  \frac{9 M_i}{\mu_f^2} \right)\|g_i(x_k;0)\|^2.
 \end{equation}
 Rearranging and taking expectation gives
 \begin{equation}
 \label{in:15}
 \lim_{k\to \infty} \mathbf{E}\left[\frac{\|g_i(x_{k+1};0)\|}{\|g_i(x_{k};0)\|^2}\;|\;x_k\right] \le \lim_{k\to \infty}  \mathbf{E}\left[1 + \frac{9 M_i}{\mu_F^2}\;|\;x_k\right].
 \end{equation}
 The right hand side of \eqref{in:15} is constant for all $i$, which implies quadratic convergence of $\|g_i(x_k;0)\|$ in expectation.

 Moreover, if $\eta_k^i \to 0$ as $k\to \infty $, then from \eqref{eq:105}, Theorem \ref{thm:1} and Lemma \ref{lem:Fdec2},
  $$
 \lim_{k\to \infty} \mathbf{E}\left[\frac{\|g_i(x_{k+1};0)\|}{\|g_i(x_{k};0)\|}\;|\;x_k\right] = 0,
 $$
 which implies superlinear convergence of $\|g_i(x_k;0)\|$ in expectation.
\end{proof}


\section{Numerical Experiments}
\label{S_Numerical}
In this section we examine the performance of two versions of RCD and two versions of a Uniform Coordinate Descent method (UCDC) \cite{petermartin}
on two common optimization problems. The first problem is an $\ell_1$-regularized least squares problem of the form \eqref{Def_F} with
\begin{equation}
\label{least_squares}
f(x) =\tfrac12\|Ax-b\|^2 \quad \mbox{and} \quad \Psi(x) = c \|x\|_1,
\end{equation}
where $c>0$, $x\in\mathbb{R}^N$, $A\in\mathbb{R}^{m\times N }$ and $b\in\mathbb{R}^m$. The second problem is an $\ell_1$-regularized logistic regression problems of the form \eqref{Def_F} with
\begin{equation}
\label{logistic_regression}
f(x) = \sum_{j=1}^m\log(1+e^{-b_jx^T a_j}) \quad \mbox{and} \quad \Psi(x) =c \|x\|_1,
\end{equation}
where $c>0$, $a_j\in\mathbb{R}^N$ $\forall j=1,2,\ldots,m$ are the training samples and $b_j\in\{-1,+1\}$ are the corresponding labels.

For \eqref{least_squares} a synthetic sparse large scale experiment is performed and for \eqref{logistic_regression} we compare the methods on two real world large scale problems from machine learning. Notice that for both \eqref{least_squares} and \eqref{logistic_regression}, $\Psi(x) = c\|x\|_1$, which is fully separable into coordinates. This means that, for RCD, we have complete control over the block decomposition, and the indices making up each block can change at every iteration.

All algorithms are coded in MATLAB, and for fairness, MATLAB is limited to a single computational thread for each test run.
All experiments are performed on a Dell PowerEdge R920 running Redhat Enterprise Linux with four Intel Xeon E7-4830 v2 2.2GHz, 20M Cache, 7.2 GT/s QPI, Turbo (4x10Cores).

\subsection{Implementations of RCD and UCDC}
In this section we discuss some details of the implementations of methods RCD and UCDC.

\subsubsection{RCD}

For the RCD method, we fix the size of blocks $\tau>1$ (to be given in the numerical experiments subsections), and at every iteration of RCD, $\tau$ coordinates are sampled uniformly at random without replacement.

We implement two versions of RCD, which we denote by RCD v.1 and RCD v.2. The two versions only differ in how matrix $H_k\ii$ is chosen. In particular, for RCD v.1 we set $H_k\ii\eqdef \text{diag}(\nabla_i^2 f(x_k))$ for all $i$ and $k$. In this case subproblem \eqref{eq_subproblem} is separable
and it has a closed form solution
$$t\ii_k = \mathcal{S}(x\ii_k - (H_k\ii)^{-1}\nabla_i f(x\ii_k), c\, \text{diag}((H_k\ii)^{-1})),$$
where
\begin{equation}
\label{soft_thresholding}
 \mathcal{S}(u,v) = \mbox{sign}(u) \max(|u| - v,0)
\end{equation}
is the well-known soft-thresholding operator which is applied component wise when $u$ and $v$ are vectors.
Notice that since the subproblem is solved exactly there is no need to verify the stopping conditions \eqref{Def_stoppingconditions}.

For RCD v.2, we set
\begin{equation}
\label{reg_H_k}
H_k\ii \eqdef \nabla_i^2 f(x_k) + \rho I_{N_i}, \text{ for all } i \text{ and } k,
\end{equation}
where $\rho>0$ guarantees that $H_k\ii$ is positive definite for all $i,k$. Hence, the subproblem \eqref{eq_subproblem} is well defined. The larger $\rho$ is the smaller the condition number of matrix $H_k$ becomes, hence, the faster subproblem \eqref{eq_subproblem} will be solved by an iterative solver. However, we do not want $\rho$ to dominate matrix $H_k$ because the essential second order information from $\nabla^2 f(x_k)$ will be lost.

In this setting of matrix $H_k\ii$ we solve subproblems \eqref{eq_subproblem} iteratively using an Orthant Wise Limited-memory Quasi-Newton (OWL) method,
which can be downloaded from \url{http://www.di.ens.fr/~mschmidt/Software/L1General.html}.
We chose OWL because it has been shown in \cite{sqa} to result in a robust and efficient deterministic version of RCD, i.e. $\tau=N$ (one block of size $N$). Note that we never explicitly form matrix $H_k\ii$, we only perform matrix-vector products with it in a matrix-free manner.

\subsubsection{UCDC}

We also implement two versions of a uniform coordinate descent method as it is described in Algorithm $2$ in \cite{petermartin}. For both versions the size $\tau$ of the blocks and the decomposition of $\mathbb{R}^N$ into $\ceil{N/\tau}$ blocks are \emph{fixed a-priori} and all blocks are selected by UCDC with uniform probability. We compare two versions of UCDC, denoted by UCDC v.1 and UCDC v.2 respectively. For UCDC v.1 we set $\tau = 1$ and for UCDC v.2 we set $\tau>1$ (the exact $\tau$ is given later).

One of the key ingredients of UCDC are the block Lipschitz constants, which are explicitly required in the algorithm. For single coordinate blocks, the Lipschitz constants can be computed with relative ease. However, for blocks of size greater than 1, the block Lipschitz constants can be far more expensive to compute. (For example, for problem \eqref{least_squares}, the block Lipschitz constants correspond to the maximum eigenvalue of $A_i^TA_i$, where $A_i \eqdef AU_i$.) For this reason, we do not compute the actual block Lipschitz constants,  rather, we use an overapproximation.

To this end, let $L_j>0$ $\forall j=1,2,\cdots,N$ denote the coordinate Lipschitz constants of function $f$. Then the direction $t\ii$ at every iteration is obtained by solving exactly subproblem \eqref{eq_subproblem} with
\begin{equation}
\label{H_k_ucdc}
H_k\ii:= \Big(\sum_{j\in i}L_j\Big) I_\tau,
\end{equation}
using operator \eqref{soft_thresholding}. Notice that for problem \eqref{least_squares}, \eqref{H_k_ucdc} is equivalent to $H_k\ii = \text{trace}(A_i^TA_i) I_{\tau}$, where $\text{trace}(A_i^TA_i)$ is an overapproximation of the maximum eigenvalue of $A_i^TA_i$.

Moreover, notice that Algorithm $2$ in \cite{petermartin} is a special case of RCD where the subproblem \eqref{eq_subproblem} is solved exactly
and line search is unnecessary. This is because, by setting $H_k\ii$ as in \eqref{H_k_ucdc}, then subproblem \eqref{eq_subproblem} is an over estimator of function $F$ along block coordinate direction $t\ii$ (for details we refer the reader to \cite{petermartin}).

\subsection{Termination Criteria and Parameter Tuning}
The only termination criteria that RCD and UCDC should have are maximum number of iterations or maximum running time. This is because using subgradients as a measure of distance from optimality or any other operation of similar cost are considered as expensive tasks for large scale problems. In our experiments RCD and UCDC are terminated when their running time exceeds the maximum allowed running time.
Furthermore, for RCD we set parameter $\eta\ii_k$ in \eqref{Def_stoppingconditions} equal to $0.9$ $\forall i,k$ and
$\rho = 10^{-6}$ in \eqref{reg_H_k}.
The maximum number of backtracking line search iterations is set to 10 and $\theta=10^{-3}$.
For UCDC the coordinate Lipschitz constants $L_j$ $\forall j$ are calculated once at the beginning of the algorithm and this task is included in the overall running time.
Finally, all methods are initialized with the zero solution.

\subsection{$\ell_1$-Regularized Least Squares}
In this subsection we present the performance of RCD and UCDC on the $\ell_1$-regularized least squares problem \eqref{least_squares}.
For this problem the data $A$ and $b$ were synthetically constructed using a generator proposed in \cite[Section $6$]{nesterovgen}, and we set $c=1$.
The advantage of this generator is that it produces data $A$ and $b$
with a known minimizer $x_*$. We slightly modified the generator so that we could control the density of $A$.

The dimensions of the problem are $N=2^{21}$ and $m=N/4$ and the generated matrix $A$ is full rank (with at least one non zero component per column) and a density of $\approx10^{-4}mN$. The optimal solution is set to have $\ceil{0.01N}$ non zero components with values uniformly at random in the interval $[-1,1]$. For UCDC, the coordinate Lipschitz constants are
$
L_j := \|A_j\|_2^2 \quad  j=1,2,\cdots,N,
$
and for RCD v.1, RCD v.2 and UCDC v.2, we set $\tau = \ceil{0.01 N}$.

The result of this experiment is shown in Figure \ref{fig1}.
\begin{figure}[h!]%
\centering
\subfloat[Objective function $F(x)$ against iterations]{%
\label{fig1a}%
\includegraphics[scale=0.32]{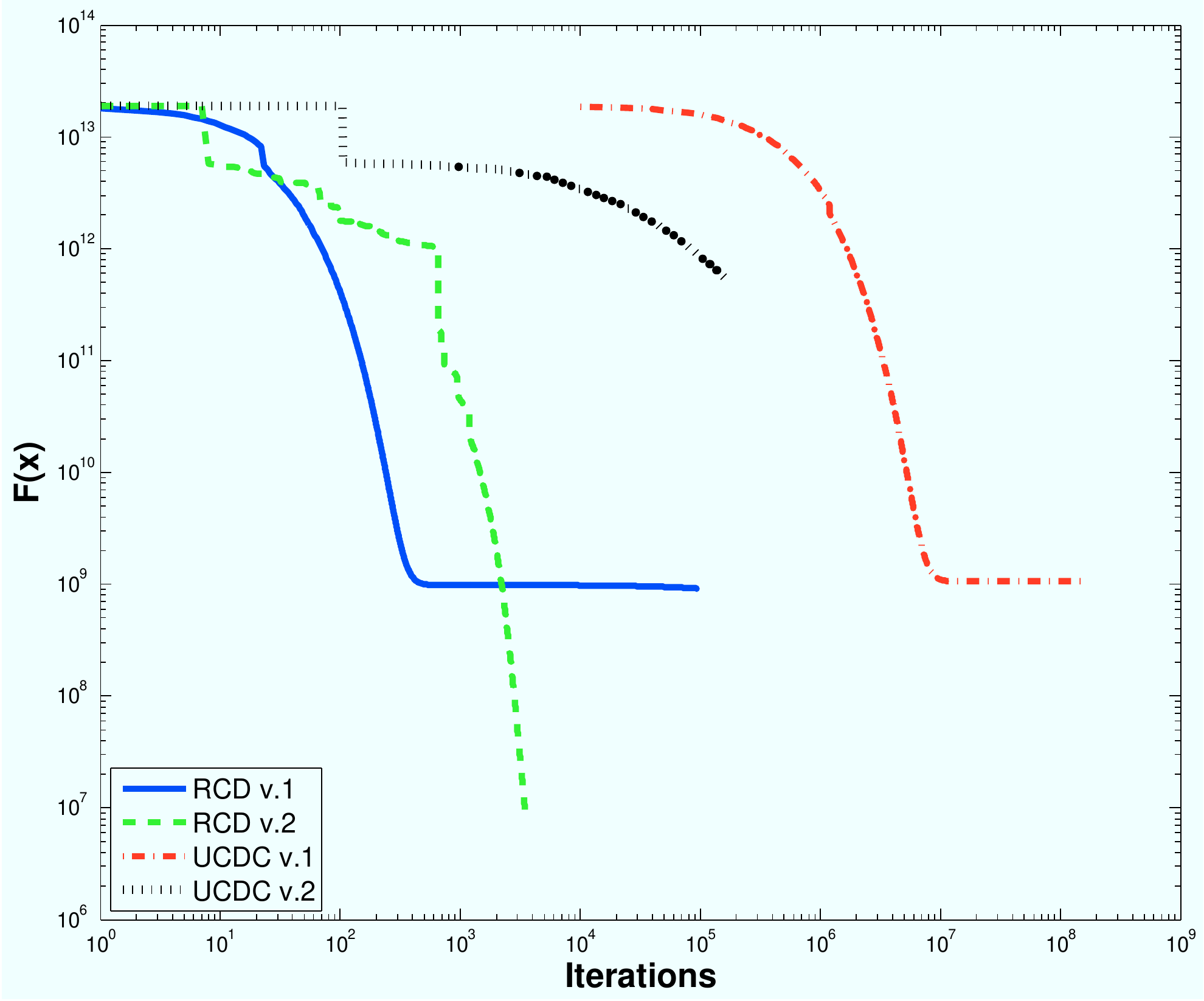}}
\subfloat[Objective function $F(x)$ against time]{%
\label{fig1b}%
\includegraphics[scale=0.32]{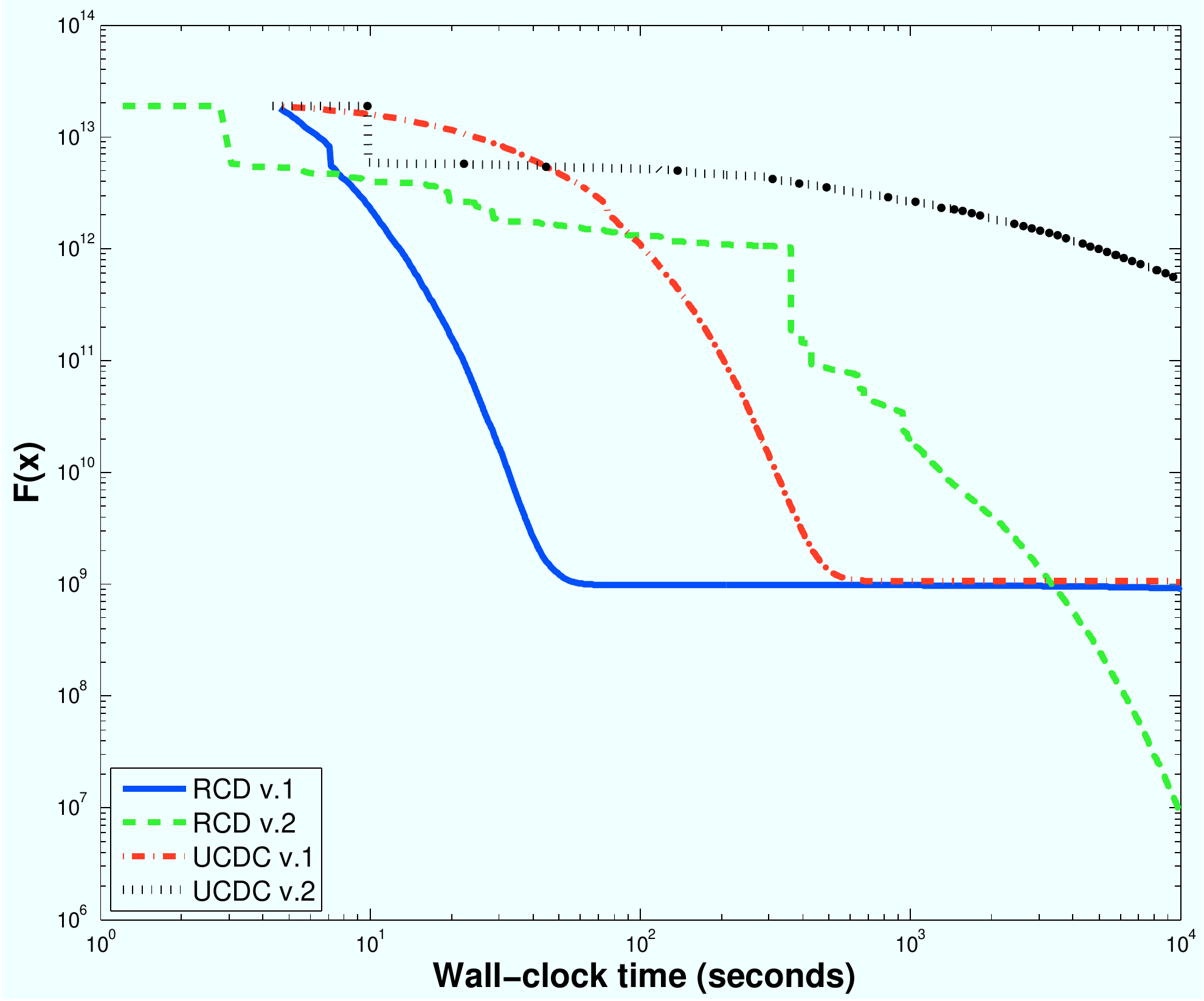}}\\
\caption{Performance of all four methods RCD v.1 and v.2 and UCDC v.1 and v.2 on a sparse large scale $\ell_1$-regularized least squares problem.
For practical purposes,
for UCDC v.1 results are printed every ten thousand iterations. \textit{Calculation of $F(x)$ is not included in running time of the methods.}
\textbf{Fig.1a} shows how the objective function $F(x)$ decreases as a function of the number of iterations. \textbf{Fig.1b} shows
how the objective function $F(x)$ decreases as a function of wall-clock time measured in seconds.}
\label{fig1}%
\end{figure}
In this figure notice that all methods were terminated after $10^4$ seconds. For practical purposes, for UCDC v.1 results are shown
every $10^4$ iterations. For all other methods results are shown after the first iteration takes place and then at every iteration. Observe in sub Figure \ref{fig1a} that block
methods RCD v.1, RCD v.2 and UCDC v.2 performed fewer iterations compared to the single coordinate UCDC v.1.
This is due to much larger per iteration computational complexity of the former methods compared to the latter.
RCD v.2 despite its larger per iteration computational complexity among all methods it was the only one that solved
the problem to higher accuracy within the required maximum time. Moreover, observe in sub Figure \ref{fig1b} that for purely practical purposes it might be better to have a combination of methods
RCD v.1 and v.2. The former could be used at the beginning of the process while the latter could be used at later stages in order to guarantee robustness and speed
closer to the optimal solution. \emph{Finally, it is important to mention that on this problem for both RCD versions unit step sizes $\alpha$ were accepted by the backtracking line search for a major part of the process. Hence, backtracking line search was inexpensive.}

\subsection{$\ell_1$-Regularized Logistic Regression}
In this section we present the performance of RCD and UCDC on the $\ell_1$-regularized logistic regression problems \eqref{logistic_regression}.

Such problems are important in machine learning and are used for training a linear classifier $x\in\mathbb{R}^N$ that separates input data into two distinct clusters, for example, see \cite{yuanho} for further details.

We present the performance of the methods on two sparse large scale data sets.
Problem details are given in Table \ref{LRprobs}, where $A\in\mathbb{R}^{m\times N}$ is a matrix whose rows are training samples.
\begin{table}[h!]
	\centering
	\caption{Properties of two $\ell_1$-regularized logistic regression problems. The second and third columns show the number of training samples and features, respectively.
	The fourth column shows the sparsity of matrix $A$.}
	\begin{tabular}{lccc}
		\textbf{Problem}& \textbf{$\boldsymbol m$}	& \textbf{$\boldsymbol N$}		 & \textbf{$\mathbf{ nnz(A)/(mN)}$}  \\
		webspam	& $350,000$	& $16,609,143$	& $2.24e$-$4$    \\
		kdd2010 (algebra)	& $8,407,752$     & $20,216,830$	& $1.79e$-$6$  \\
	\end{tabular}
	\label{LRprobs}
\end{table}

The data sets can be downloaded from the collection of LSVM problems in \url{http://www.csie.ntu.edu.tw/~cjlin/libsvmtools/datasets/}.
For both experiments we set $c=10$, which resulted in more than $99\%$ classification accuracy of the used data sets.

By \cite[Table~10]{petermartin},
the coordinate Lipschitz constants for UCDC are
$$
L_j := \frac{1}{4}\sum_{q=1}^m (A_{qj}y_q)^2 \quad \forall j=1,2,\cdots,N,
$$
where $A_{qj}$ is the component of matrix $A$ at $qth$ row and $jth$ column.
For block versions RCD v.1, RCD v.2 and UCDC v.2, we set $\tau = \ceil{0.001 N}$.

The result of this experiment is shown in Figure \ref{fig2}.
\begin{figure}[h!]%
\centering
\subfloat[webspam, $F(x)$ against iterations]{%
\label{fig2a}%
\includegraphics[scale=0.32]{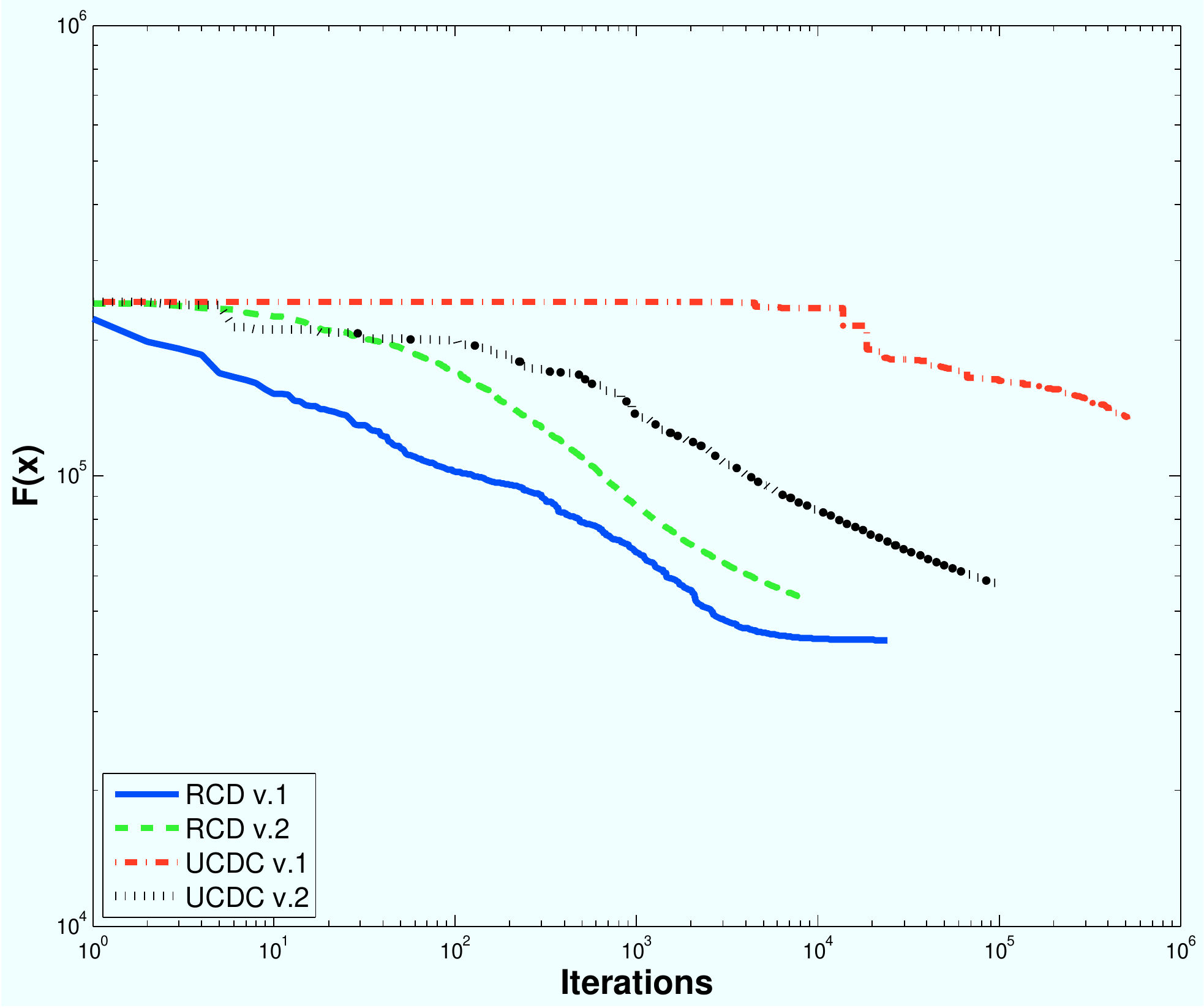}}
\subfloat[webspam, $F(x)$ against time]{%
\label{fig2b}%
\includegraphics[scale=0.32]{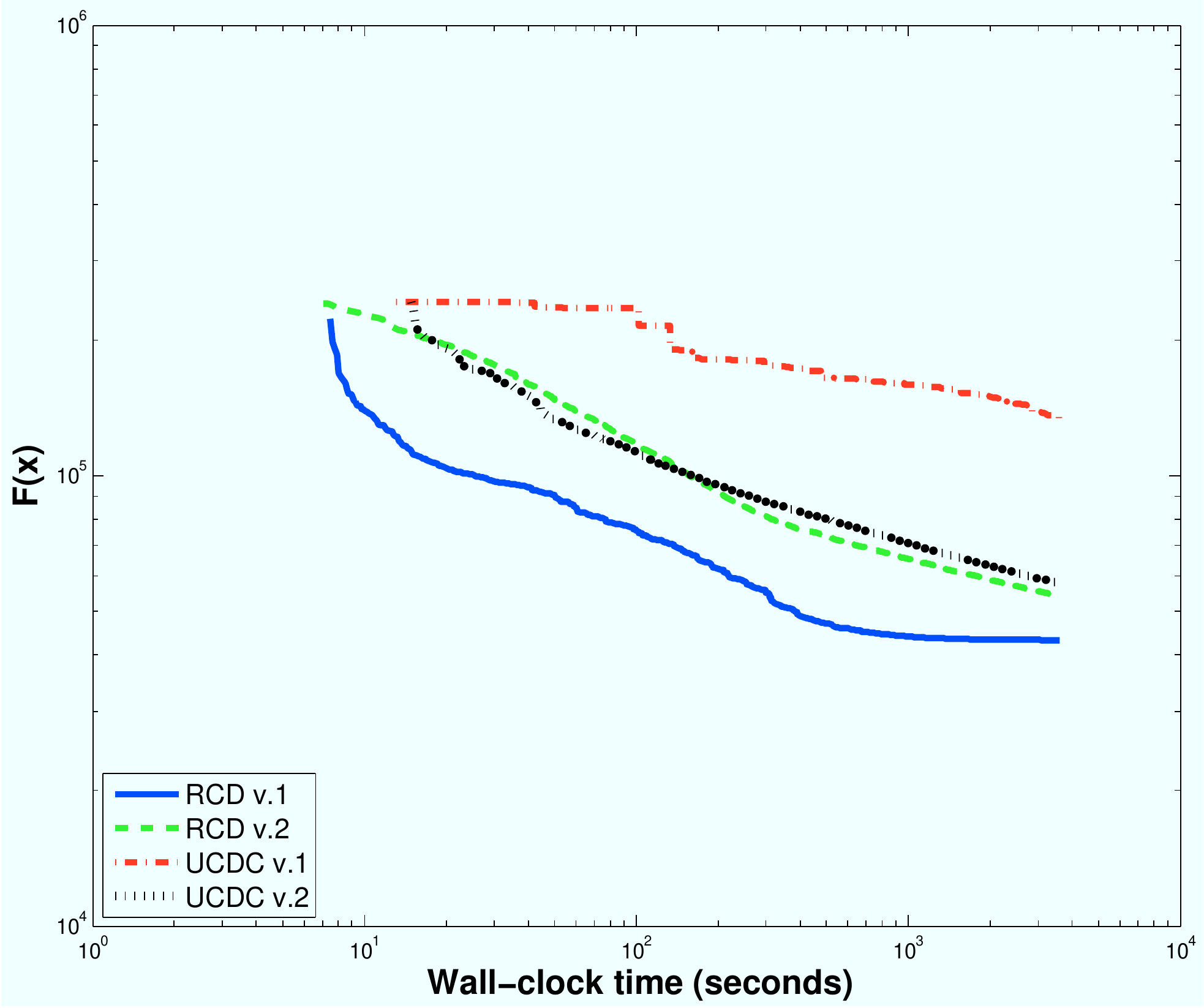}}\\
\subfloat[kdda, $F(x)$ against iterations]{%
\label{fig2c}%
\includegraphics[scale=0.32]{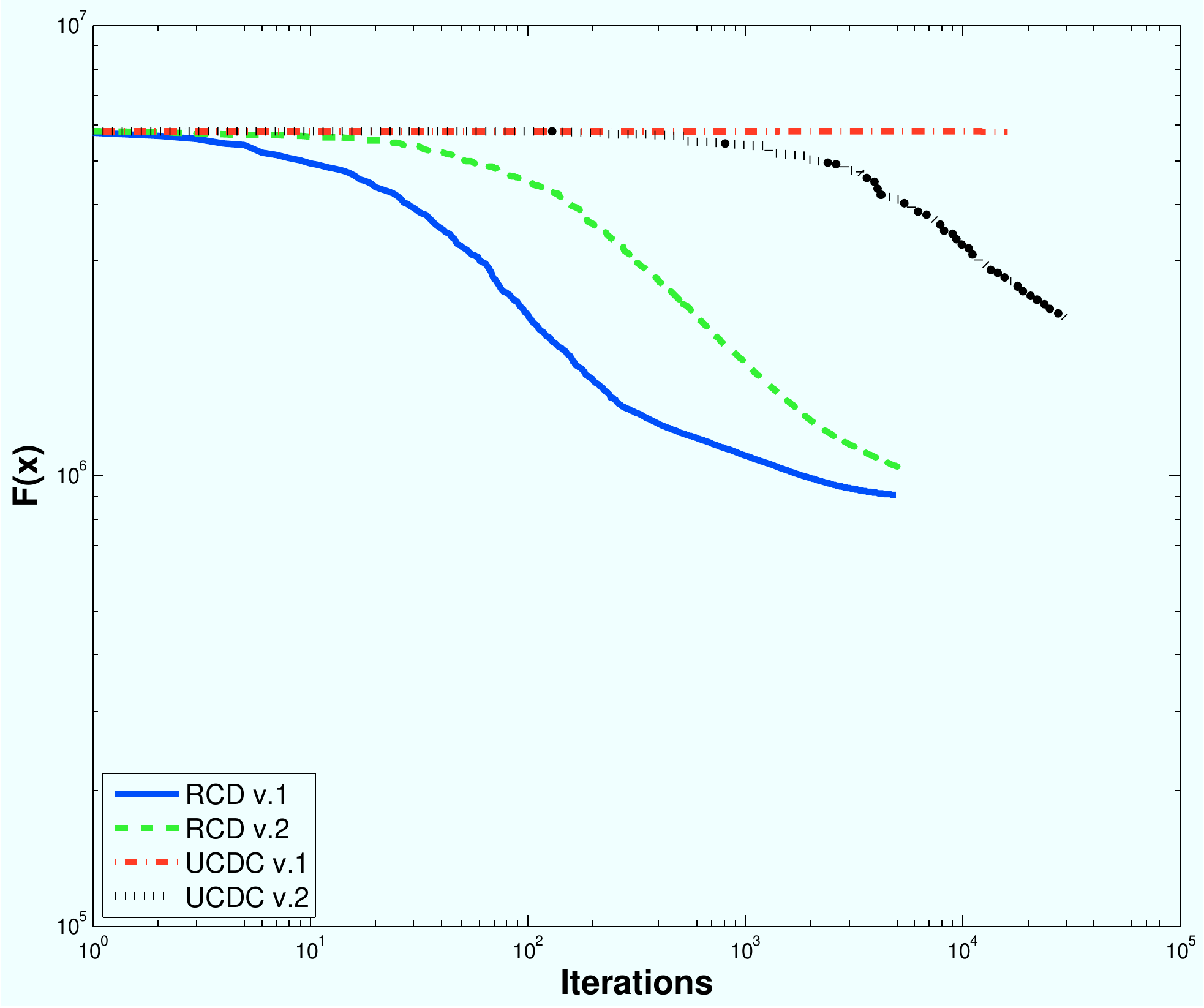}}
\subfloat[kdda, $F(x)$ against time]{%
\label{fig2d}%
\includegraphics[scale=0.32]{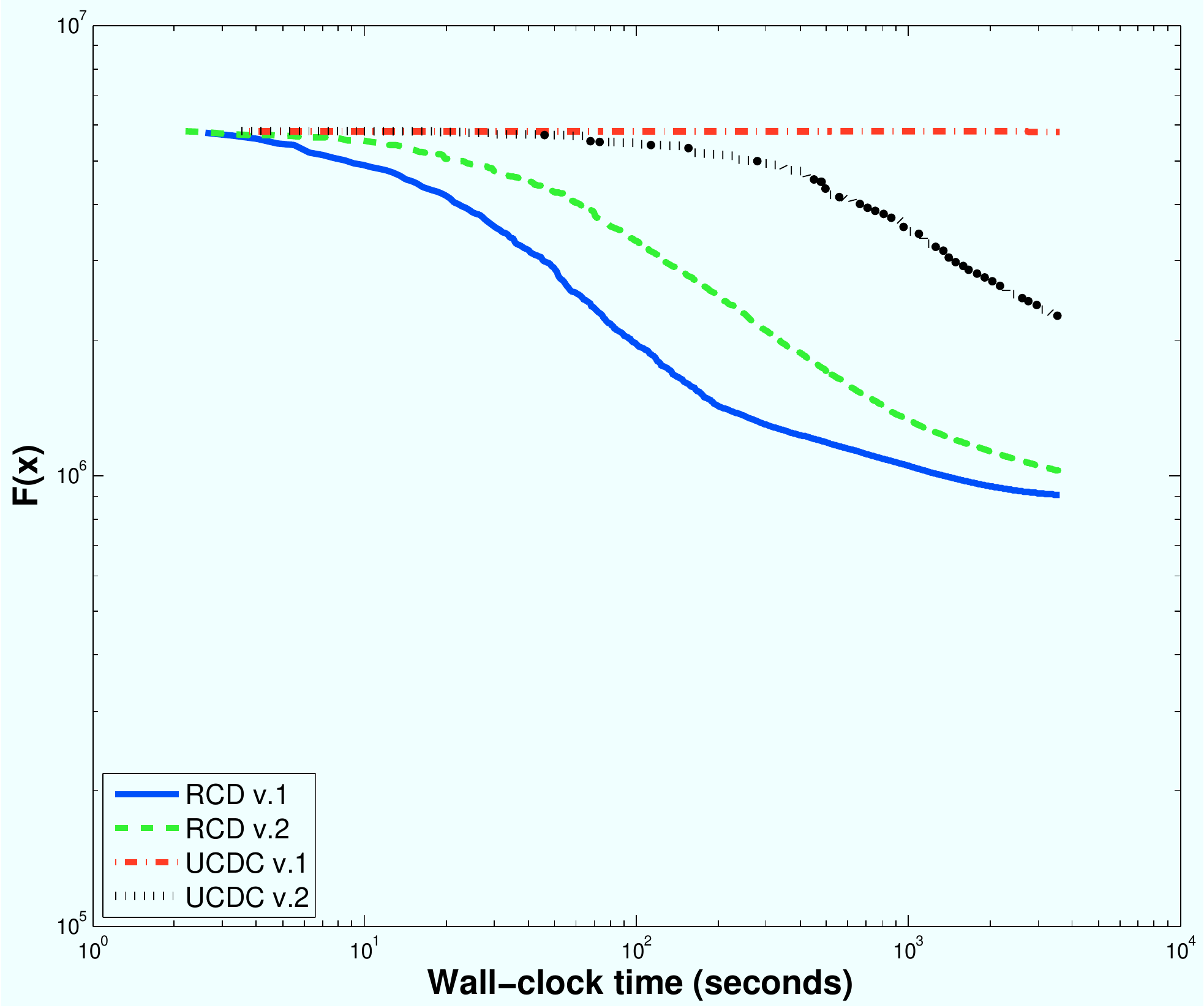}}\\
\caption{Performance of all four methods RCD v.1 and v.2 and UCDC v.1 and v.2 on two large scale $\ell_1$-regularized logistic regression problems.
The first and second rows of figures show the results for problems \textit{webspam} and \textit{kdda}, respectively.
\textit{Calculation of $F(x)$ is not included in running time of the methods.}}
\label{fig2}%
\end{figure}
In this experiment all methods were terminated after one hour of running time. Notice that RCD versions were more efficient than both UCDC versions,
with RCD v.1 being the fastest among all.  An interesting observation in Figures \ref{fig2a} and \ref{fig2c} is that RCD versions had similar per iteration computational complexity
since they performed similar number of iterations within the maximum allowed running time. However, for RCD v.1, it seems that diagonal information from the second order derivatives of $f$ was enough
to decrease faster the objective function for all iterations compared to RCD v.2. Finally, in this experiment we observed that both RCD versions accepted unit step sizes for a major part
of the process.


\section{Conclusion}
We presented a robust randomized block coordinate descent method for composite function problems \eqref{Def_F}, which we name \textit{Robust Coordinate Descent} (RCD), that can properly handle second-order (curvature) information.
The proposed method can vary from first- to second-order; depending on how large the block updates are set, how accurate second-order information are used and how
inexactly the arising subproblems are solved.
Although the per iteration computational complexity might be higher for RCD, we present synthetic and real world large scale examples where the number of iterations substantially decreases, as well as the overall time.

From the theoretical point of view, we prove global convergence of RCD and under standard assumptions we show that RCD exhibits on expectation \textit{block} quadratic or superlinear rate of convergence.


\bibliographystyle{plain}
\bibliography{references}

\end{document}